\newcommand{\K}{\mathscr{K}}
\newcommand{\wa}{w_\alpha}
\newcommand{\Bo}{\mathscr{B}}
\newcommand{\Bop}{\dot{\mathscr{B}}}
\newcommand{\Boc}{\mathscr{B}_c}
\newcommand{\Ci}{\mathscr{C}}
\newcommand{\N}{\mathbb{N}}
\newcommand{\R}{\mathbb{R}}
\newcommand{\C}{\mathbb{C}}
\newcommand{\Z}{\mathbb{Z}}
\newcommand{\D}{\mathbb{D}}
\newcommand{\Hu}{\mathbb{R}_+^{n+1}}
\newcommand{\De}{\mathscr{D}}
\newcommand{\s}{\mathscr{S}}
\newcommand{\F}{\mathcal{F}}
\newcommand{\supp}{\operatorname{supp}}
\newcommand{\ve}{\varepsilon}
\newcommand{\Es}{\mathscr{S}}
\newcommand{\hdot}{\bm{\ldotp}}
\newcommand{\re}{\operatorname{Re}}
\newcommand{\divv}{\operatorname{div}}
\theoremstyle{plain}
\newtheorem{prop}{Proposition}[section]
\newtheorem{thm}[prop]{Theorem}
\newtheorem{lemma}[prop]{Lemma}
\newtheorem{cor}[prop]{Corollary}
\theoremstyle{definition}
\newtheorem{dfn}[prop]{Definition}
\theoremstyle{remark}
\newtheorem*{rmk}{Remark}
\numberwithin{equation}{section}
\begin{document}

\title[Generalized axially symmetric potentials]{Generalized axially symmetric potentials with
distributional boundary values}

\date{\today}

\author{Jens Wittsten}
\address{Graduate School of Human \& Environmental Studies,
Kyoto University,
Yoshida Nihonmatsu-cho, Sakyo-ku Kyoto 606-8501, Japan
}
\email{jensw@maths.lth.se}

\begin{abstract}
We study a counterpart of the classical Poisson integral for a family of weighted Laplace differential
equations in Euclidean half space, solutions of which are known as generalized axially symmetric potentials.
These potentials appear naturally in the study of hyperbolic Brownian motion with drift.
We determine the optimal class of tempered distributions which by means of the so-called $\s'$-convolution
can be extended to generalized axially symmetric potentials.
In the process, the associated Dirichlet boundary value problem is solved,
and we obtain sharp order relations for the asymptotic growth of these extensions.
\end{abstract}

\keywords{Generalized axially symmetric potential, Poisson integral, weighted Laplace operator, Poisson kernel, 
weighted space of distributions, hyperbolic Brownian motion}

\subjclass[2010]{Primary: 31B05; Secondary: 35J25}

\maketitle

\section{Introduction}

Consider in $n+1$ dimensions the elliptic partial differential equation
\begin{equation}\label{Dalfa}
D_\alpha u\equiv y^{-\alpha}\bigg( \frac{\partial^2 u}{\partial x_1^2}+\cdots+\frac{\partial^2 u}{\partial x_n^2}+\frac{\partial^2 u}{\partial y^2}
-\frac{\alpha}{y}\frac{\partial u}{\partial y}\bigg)=0,
\end{equation}
where $\alpha$ is an arbitrary real parameter.
When $\alpha=0$ we recover the classical Laplace equation, and when $\alpha$ is a negative integer and $n=1$
then \eqref{Dalfa} is satisfied by the family of axially symmetric harmonic functions in
$(2-\alpha)$-dimensional space,
considered in a meridian plane. Solutions to \eqref{Dalfa} have therefore historically been referred to as
generalized axially symmetric potentials, see the exposition by Weinstein~\cite{Weinstein}.
This theory proved to be a very strong tool allowing treatment
of various problems in for example fluid mechanics and generalized Tricomi equations~\cite{Weinstein,Weinstein2}.
In this context, the operator $y^{\alpha+1}D_\alpha$ has traditionally been denoted by $L_{k}$
with parameter $k=-\alpha$, that is,
\[
L_k u\equiv y\bigg( \frac{\partial^2 u}{\partial x_1^2}+\cdots+\frac{\partial^2 u}{\partial x_n^2}+\frac{\partial^2 u}{\partial y^2}
\bigg)+k\frac{\partial u}{\partial y},\quad k\in\R.
\]

The equation $D_\alpha u=0$ is the Laplace-Beltrami equation in the Riemannian space defined by the metric
\[
ds^2=y^{2\alpha/(1-n)}\bigg( \sum_{i=1}^n dx_i^2+dy^2\bigg),\quad n>1.
\]
This fact has recently led to the appearance of the operator $D_\alpha$ in connection with the study of so-called
$(n+1)$-dimensional hyperbolic Brownian motion with drift.
This area has been of much interest lately, since it is related to geometric Brownian motion
and Bessel processes and has applications to risk theory, see
the recent work by Ma{\l}ecki and Serafin~\cite{MaleckiSerafin} and the
references therein. Contributions have also been made by among others
Baldi, Byczkowski, Casadio Tarabusi, Fig{\`a}-Talamanca, Graczyk, Ryznar, St{\'o}s, and Yor
\cite{BaldiEtAl2,BaldiEtAl,ByczkowskiEtAl,ByczkowskiRyznar}.
We recall that if $\mathbb H^{n+1}$ denotes the half space model of $(n+1)$-dimensional real hyperbolic space,
that is, $\mathbb H^{n+1}$ is the space $\{(x,y)\in\R^{n+1}:y>0\}$ endowed with the metric $ds^2=y^{-2}(dx_1^2+\cdots dx_n^2+dy^2)$,
then an $(n+1)$-dimensional hyperbolic Brownian motion with drift in $\mathbb H^{n+1}$
is defined as a diffusion corresponding to the system of stochastic differential equations
\begin{equation}\label{HyperbolicBrownianMotion}
\begin{cases}
dX_t &  \!\!\!=  Y_tdW_t,\\
dY_t &  \!\!\!=  Y_tdB_t-(\mu-\frac{1}{2}) Y_t dt.
\end{cases}
\end{equation}
Here $W_t$ and $B_t$ are independent $n$-dimensional and one-dimensional Brownian motions, respectively. 
The generator of this diffusion is given by
\[
y^2\bigg(\sum_{i=1}^{n}\partial_{x_i}^2+\partial_{y}^2\bigg)-(2\mu-1)y\partial_y
\]
divided by a factor 2, which is a non-constant multiple of $D_\alpha$ for the parameter value $\alpha=2\mu-1$.
The case $\mu=n/2$ corresponds to classical hyperbolic Brownian motion on $\mathbb H^{n+1}$.

In this paper we shall study a half space boundary value problem for the operator $D_\alpha$.
Let $\Hu=\{(x,y)\in\R^n\times\R:y>0\}$ denote the half space in $n+1$ dimensions.
We will identify the boundary of $\Hu$ with $\R^n$. Introduce the weight function
$\gamma_\alpha(x,y)=y^{-\alpha}$ for $(x,y)\in\Hu$, and note that $D_\alpha$ can be written in
divergence form
\begin{equation}\label{PDOdiv}
D_\alpha u=\divv{(\gamma_\alpha\nabla u)},\quad\gamma_\alpha(x,y)=y^{-\alpha},
\quad (x,y)\in\Hu.
\end{equation}
Since the behavior of solutions to $D_\alpha u=0$ are essentially different for the two
parameter ranges $\alpha>-1$ and $\alpha\le-1$ (see the result due to Huber~\cite{HuberUnique}, included below
as Theorem \ref{thm:Huberuniqueness})
we shall restrict our attention to the parameter range $\alpha>-1$ and study the Dirichlet problem 
\begin{equation}\label{Dirichletproblem}
\left\{
\begin{aligned}
D_\alpha u & =0 \quad\text{in}\ \Hu, \\
u & =f  \quad\text{in}\ \R^n.
\end{aligned}
\right.
\end{equation}
We will assume that the boundary data $f\in\Es'$ is a tempered distribution in $\R^n$;
the boundary condition is to be understood
as $\lim_{y\to0}u_y=f$ in $\Es'$, where 
\begin{equation}\label{uyfcn}
u_y(x)=u(x,y),\quad x\in\R^n,
\end{equation}
for $y>0$.

Due to the definition of the weight $\gamma_\alpha$, \eqref{Dirichletproblem} is singular on the hyperplane $y=0$.
We mention here that $D_\alpha$
is related to a certain weighted complex Laplace operator $\Delta_\alpha$ in the unit disc $\D$,
which exhibits similar behavior near the boundary of $\D$.
In fact, when $n=1$ the operator $D_\alpha$ is realized as (a multiple of) the symmetric part
of the differential operator
\[
u(z)\mapsto\partial_z \gamma_\alpha(y)\bar\partial_z u(z),\quad z=x+iy\in\C,\quad y>0,
\]
where $\partial$ and $\bar\partial$ are the usual complex derivatives. The mentioned weighted Laplace operator
$\Delta_\alpha$ is obtained by replacing the weight function $\gamma_\alpha$
by its counterpart in the unit disc, the so-called standard weight $z\mapsto (1-|z|^2)^{-\alpha}$
for $z\in\D$, appearing in connection to Bergman space theory. The corresponding Dirichlet problem
for $\Delta_\alpha$ in the unit disc was recently solved by the author in collaboration with A. Olofsson~\cite{OWjapanpaper}.
We also mention the recent paper by Olofsson~\cite{Olofsson2012differential} which in a certain
sense studies the unit disc analog of the Dirichlet problem \eqref{Dirichletproblem}.
The family of operators studied by Olofsson~\cite{Olofsson2012differential}
has been shown to be connected to weighted integrability of polyharmonic functions in the unit disc
by Borichev and Hedenmalm~\cite{BorichevHedenmalm}. 
See also Hedenmalm~\cite{Hedenmalm2014}.

The singular or degenerate behavior of $D_\alpha$ near the boundary means that
the theory for strictly elliptic partial differential equations
is not applicable to \eqref{Dirichletproblem}. This notwithstanding, much is still known about the existence and uniqueness of solutions to the
Dirichlet problem \eqref{Dirichletproblem} when the data is regular.
In particular, the notion corresponding to a Poisson integral appears in Weinstein~\cite{Weinstein}
in the case $n=1$, using a kernel function corresponding to
\begin{equation*}
\K_\alpha(x,y)= \frac{\Gamma((\alpha+n+1)/2)}{\Gamma((\alpha+1)/2)\pi^{n/2}}\cdot
\frac{y^{\alpha+1}}{(x^2+y^2)^{(\alpha+n+1)/2}},
\quad (x,y)\in\Hu,
\end{equation*}
where $\Gamma(s)$ is the Gamma function. We study properties of this kernel in Section \ref{sec:GASP}.
(In the context of hyperbolic Brownian motion, the term {\it probability
density function} is commonly used.)
Moreover, a fundamental solution in any dimension was found by Diaz and Weinstein~\cite{DiazWeinstein},
and a generalized Poisson kernel for the Dirichlet problem in a hemisphere was also provided by Huber~\cite{HuberUnique}.
Similar formulas appear in the more recent paper by Caffarelli and Silvestre~\cite{CaffarelliSilvestre} (and
in other subsequent work) on the fractional Laplacian $(-\Delta)^\mu$, $0<\mu<1$, where 
$(-\Delta)^\mu$ was shown to be related to the extension problem \eqref{Dirichletproblem} for $\alpha=2\mu-1$ through the Dirichlet to Neumann map.

However, for boundary data $f\in\Es'$, it is not immediately clear how to define
the ``Poisson integral of $f$'' by means of the kernel function $\K_\alpha$. The natural choice would be through convolution
$\K_{\alpha,y}\ast f$ in the sense of distributions,
with $\K_{\alpha,y}$ interpreted in accordance with \eqref{uyfcn}, but this is not applicable in our
case since the Fourier transform of $\K_{\alpha,y}$ is not smooth at the
origin, see Theorem \ref{thm:Ftransform}. (Recall that the convolution
$u\ast v$ was originally defined by Schwartz~\cite{Schwartz}
for pairs $u\in\mathscr O_C'$ and $v\in\Es'$, where $\mathscr O_C'$ is the space of rapidly decaying distributions,
and that the Fourier transform is an isomorphism between $\mathscr O_C'$ and 
the space $\mathscr O_M$ of slowly growing smooth functions, see Schwartz~\cite[Chapitre VII]{Schwartz}.)
To circumvent this problem we shall use the so-called $\Es'$-convolution
proposed by Hirata and Ogata~\cite{HirataOgata} and later given an equivalent form by Shiraishi~\cite{Shiraishi}.

In Section \ref{section:Weighteddistributions}
we recall the definitions of certain weighted spaces of distributions
(continuously embedded in $\Es'$), and determine the optimal class 
of tempered distributions $f$ for which
the $\Es'$-convolution $\K_{\alpha,y}\ast f$ is well defined for all $y>0$,
see Theorem \ref{thm:convolutionwelldefined}. 
In Section \ref{sec:Dirichletproblem}
we define the Poisson integral $\mathscr \K_\alpha\lbrack f\rbrack :(x,y)\mapsto \K_{\alpha,y}\ast f(x)$
for $f$ in this class, 
and show that it has boundary limit $f$ in $\Es'$, see Theorem \ref{thm:existence}.
We also establish that
$u = \mathscr \K_\alpha\lbrack f\rbrack$ solves $D_\alpha u=0$ in $\Hu$ for such $f$,
thus proving existence of solutions to the Dirichlet problem \eqref{Dirichletproblem},
see Corollary \ref{cor:existence}.
A similar approach has been used by Alvarez, Guzm{\'a}n-Partida and Sk{\'o}rnik
\cite{Alvarezetal1} to characterize the tempered distributions that are $\Es'$-convolvable with the classical Poisson kernel
$\K_0$ for the half space, and further used by Alvarez, Guzm{\'a}n-Partida and P{\'e}rez-Esteva
\cite{Alvarezetal2} to study harmonic extensions of distributions. To prove our results we adapt the ideas found
in the mentioned papers to the full parameter range $\alpha>-1$.
At the end of Section \ref{sec:Dirichletproblem} we also calculate
the kernel function for the Dirichlet problem $D_\alpha u=0$ in the half space
$y>\eta$ where $\eta>0$,
see Proposition \ref{prop:hitting}.
This kernel is the density of the {\it hitting distribution}
appearing in hyperbolic Brownian motion.

In Section \ref{sec:asymptoticgrowth}
we study asymptotic behavior of the
Poisson integral $\K_\alpha\lbrack f\rbrack$.
By using methods similar to
those used by Siegel and Talvila~\cite{SiegelTalvila} to study the classical Poisson integral
in the harmonic case $\alpha=0$,
the order relations for the asymptotic growth that we obtain (Theorem \ref{thm:growthestimate}) are shown to be sharp.
For comparison, we also consider the issue of uniqueness of solutions:
As evidenced by the function $u(x,y)=y^{\alpha+1}$,
which solves $D_\alpha u=0$ in $\Hu$ and vanishes at the boundary if $\alpha>-1$,
solutions to \eqref{Dirichletproblem} are not unique in general unless additional restrictions of growth
at infinity are imposed. We include one result in this direction, proved by using
a Phragm{\'e}n-Lindel{\"o}f principle due to Huber~\cite{HuberPL}
together with a regularization argument,
see Corollary \ref{Dalfauniqueness.distribution}.
However, the growth conditions imposed are
not compatible with the established asymptotic behavior of the
Poisson integral $\K_\alpha\lbrack f\rbrack$ for general $f$, so this does unfortunately not lead to
a satisfactory representation theory.

We finally wish to mention that the operator $D_\alpha$ superficially resembles
the governing operator in what is known as Calder{\'o}n's inverse conductivity problem; however,
the conditions on the weight function are totally different. In fact,
let $\Omega$ be a bounded domain in $\R^n$ for $n\ge2$, and let $\gamma$ be
a real-valued function in $L^\infty(\R^n)$ with a positive lower bound. Consider
the conductivity equation
\[
\left\{
\begin{aligned}
\divv{(\gamma\nabla u)} & =0 &\text{in}\ \Omega, \\
u & =f  &\text{on}\ \partial\Omega.
\end{aligned}
\right.
\]
In 1980, Calder{\'o}n~\cite{Calderon} posed the question whether the conductivity $\gamma$
could be recovered from the boundary measurements as described by the Dirichlet to Neumann map $\Lambda_\gamma$.
This problem, known in medical imaging as Electrical Impedance Tomography,
has been intensely studied and numerous positive results are known under slightly
stronger regularity assumptions on $\gamma$.
In 2 dimensions, the problem was recently solved by Astala and P{\"a}iv{\"a}rinta~\cite{AstalaPaivarinta} who showed that
$\gamma\in L^\infty(\R^2)$ is completely determined by $\Lambda_\gamma$ even if the boundedness
assumption on the domain $\Omega$ is dropped. For more on this, we refer to the mentioned paper
and the references therein.

\section{The kernel function}\label{sec:GASP}

In this section we discuss the kernel function $\K_\alpha$ for the Dirichlet problem \eqref{Dirichletproblem}
mentioned in the introduction, and calculate its Fourier transform.
However, we first indicate the difference in behavior of solutions
to $D_\alpha u=0$ in the two parameter ranges $\alpha>-1$ and $\alpha\le -1$.
The following result is due to Huber~\cite{HuberUnique},
stated here using our choice of notation but otherwise unchanged.

\begin{thm}[A. Huber]\label{thm:Huberuniqueness}
Let $u$ be a solution of $D_\alpha u=0$, defined in a region $G$, the boundary
of which contains an open subset $S$ of $\{(x,y)\in\R^{n+1}: y=0\}$. If $u$ assumes the
boundary value $0$ on $S$, then we may conclude
\begin{enumerate}
\item[$(\mathrm{a})$] for $\alpha\le -1: u\equiv 0$ throughout $G$,
\item[$(\mathrm{b})$] for $\alpha>-1:u$ can be represented in the form $u=y^{\alpha+1}v(x,y)$,
where $v$ is analytic on $G\cup S$ and satisfies $D_{-(2+\alpha)}v=0$. Conversely
each function of this type fulfills the above hypotheses.
\end{enumerate}
\end{thm}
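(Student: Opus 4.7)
The plan is to exploit the fact that $y=0$ is a regular singular hyperplane for $D_\alpha$ and to read off the admissible boundary behaviour from a Frobenius-type analysis. Substituting $u=y^{s}v$ into $y^{\alpha}D_\alpha u=0$ produces the indicial polynomial $s(s-1)-\alpha s$ with roots $s=0$ and $s=\alpha+1$. A direct computation with $s=\alpha+1$ yields the intertwining identity
\begin{equation*}
D_\alpha\bigl(y^{\alpha+1}v\bigr) \;=\; y^{-\alpha-1}\,D_{-(2+\alpha)}v \qquad (y>0).
\end{equation*}
When $\alpha>-1$ this settles the converse in (b): if $v$ is analytic on $G\cup S$ with $D_{-(2+\alpha)}v=0$, then $u=y^{\alpha+1}v$ solves $D_\alpha u=0$ in $G$ and, since $\alpha+1>0$, extends continuously to $S$ with $u|_{S}=0$.

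For the forward direction I would localize. Fix $(x_0,0)\in S$ and pick a small half-ball $G_0\subset G$ whose flat face $S_0$ lies in $S$. Real analyticity of the coefficients of $D_\alpha$ combined with its strict ellipticity in $\Hu$ gives real analyticity of $u$ throughout $G_0$. The decisive step is to establish a convergent representation
\[
u(x,y) \;=\; A(x,y) \;+\; y^{\alpha+1}\,B(x,y)
\]
on $G_0$, with $A$ and $B$ real analytic up to and across $S_0$ (supplemented, in the resonant cases $\alpha+1\in\{0,1,2,\dots\}$, by a logarithmic correction). Inserting the ansatz into $D_\alpha u=0$ produces two decoupled recursion systems of Cauchy--Kowalewski type for the Taylor coefficients of $A$ and $B$ in powers of $y$, driven respectively by the traces $A(\cdot,0)$ and $B(\cdot,0)$; convergence as power series with coefficients analytic in $x$ then follows from a standard majorant estimate.

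The two alternatives now fall out by reading off the trace on $S_0$. For (a), $\alpha\le-1$, so $y^{\alpha+1}$ is either singular or logarithmically coupled to $A$ as $y\to 0^{+}$; in each subcase the hypothesis $u|_{S_0}=0$ forces $A(x,0)\equiv 0$ and $B(x,0)\equiv 0$, whence the recursions yield $u\equiv 0$ in $G_0$. Since this holds at every point of $S$ and $u$ is analytic in the connected open set $G$, the identity theorem propagates the vanishing to all of $G$. For (b), $\alpha>-1$, so the term $y^{\alpha+1}B$ automatically vanishes on $S_0$ and the boundary condition only kills $A$; this leaves $u=y^{\alpha+1}v$ with $v=B$ analytic up to $S_0$. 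Gluing the local representations over a cover of $S$ extends $v$ analytically to $G\cup S$, and the intertwining identity immediately gives $D_{-(2+\alpha)}v=0$.

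The main technical obstacle is the rigorous construction of the Frobenius expansion in several variables, and in particular handling the resonant case when $\alpha+1$ is a non-negative integer, where a summand $y^{\alpha+1}(\log y)\,C(x,y)$ could in principle appear. The hypothesis that $u$ possesses a vanishing pointwise trace on $S$ rules this out, since $\log y$ is unbounded as $y\to 0^{+}$ and therefore forces $C\equiv 0$; after this logarithmic term has been disposed of, the rest of the argument is essentially a careful bookkeeping of the two indicial branches.
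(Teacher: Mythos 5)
First, note that the paper does not prove this statement at all: it is quoted verbatim from Huber's work \cite{HuberUnique} as a known result, so there is no in-paper argument to compare against. Your preliminary computations are correct -- the indicial roots $0$ and $\alpha+1$ and the intertwining identity $D_\alpha(y^{\alpha+1}v)=y^{-\alpha-1}D_{-(2+\alpha)}v$ are exactly the structural facts behind Weinstein's correspondence principle, and they do settle the converse half of (b). But as a proof of the forward direction your proposal has a genuine gap, and you have in effect labelled it yourself: the ``decisive step'' that every solution with vanishing trace on $S_0$ admits a convergent representation $u=A+y^{\alpha+1}B$ with $A,B$ analytic up to $S_0$ is asserted, not proved. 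A Cauchy--Kowalewski majorant argument only shows that, \emph{given} traces $A(\cdot,0)$ and $B(\cdot,0)$, one can \emph{construct} a solution of this form; it does not show that an arbitrary solution in a half-ball is of this form, and it cannot, because a solution of a second-order elliptic equation in a half-ball is not determined by data on the flat face alone (there is no well-posed characteristic initial value problem at $y=0$ to which you could appeal for uniqueness). Interior analyticity gives you nothing at $S_0$; the entire content of Huber's theorem is precisely the boundary regularity statement that $y^{-(\alpha+1)}u$ extends analytically across $S$, and that requires a separate device (a reflection principle, or the explicit hemisphere Poisson kernel) which your outline does not supply.

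There is also a concrete error in your treatment of the resonant case $\alpha+1\in\{1,2,\dots\}$ relevant to part (b). The logarithmic correction attached to the smaller indicial root $s=0$ has the form $y^{\alpha+1}(\log y)\,C(x,y)$, and since $\alpha+1>0$ this tends to $0$ as $y\to0^{+}$; the unboundedness of $\log y$ therefore cannot be used to force $C\equiv0$. (The log term is in fact eliminated for a different reason: in the Frobenius recursion its coefficient is slaved to the trace $A(\cdot,0)$ of the $s=0$ branch, so setting $A(\cdot,0)\equiv0$ removes it -- but that again presupposes the expansion you have not established.) The remaining pieces -- reading off the two alternatives from the sign of $\alpha+1$, and propagating $u\equiv0$ from a neighborhood of $S$ to all of $G$ by the identity theorem in case (a) -- are fine, but they rest on the unproved expansion.
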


In particular, this result implies that a Green's function for $\Hu$ does not exist when $\alpha\le -1$,
while for $\alpha>-1$ Green's function is known, see Weinstein~\cite{Weinstein}
and Diaz and Weinstein~\cite{DiazWeinstein} for the case $n=1$ and
$n\ge2$, respectively.
We shall therefore henceforth restrict our attention to the parameter range $\alpha>-1$.
Note also that Theorem \ref{thm:Huberuniqueness} can be viewed as a uniqueness result
since it indicates how far a solution is determined by its boundary values. We
shall return briefly to the question of uniqueness for the Dirichlet problem \eqref{Dirichletproblem}
at the end of Section \ref{sec:asymptoticgrowth}.

In what follows, we will permit us to write $x^2$ to denote $x_1^2+\cdots +x_n^2$ whenever $x\in\R^n$.
We will also assume that all function spaces under consideration below are
defined on $\R^n$ unless explicitly stated otherwise.

\begin{dfn}\label{def:Poissonkernel}
Let $\alpha>-1$. Define the kernel $\K_\alpha$ by 
\begin{equation*}
\K_\alpha(x,y)= \frac{\Gamma((\alpha+n+1)/2)}{\Gamma((\alpha+1)/2)\pi^{n/2}}\cdot
\frac{y^{\alpha+1}}{(x^2+y^2)^{(\alpha+n+1)/2}},
\quad (x,y)\in\Hu,
\end{equation*}
where $\Gamma(s)=\int_0^\infty t^{s-1}e^{-t}dt$ for $s>0$ is the Gamma function.
\end{dfn}

Note that the classical Poisson kernel for the half space $\Hu$,
\[
P(x,y)=\K_0(x,y)=\frac{\Gamma((n+1)/2)}{\pi^{(n+1)/2}}\cdot
\frac{y}{(x^2+y^2)^{(n+1)/2}},
\quad (x,y)\in\Hu,
\]
is obtained
for the parameter value $\alpha=0$. 
Note also that when $\alpha=n-1$ we recover the Poisson kernel for the half space model $\mathbb H^{n+1}=\mathbb H^{\alpha+2}$
of real hyperbolic space
\[
P_{\mathbb H^{n+1}}(x,y)=\K_{n-1}(x,y)=\frac{\Gamma(n)}{\Gamma(n/2)\pi^{n/2}}\cdot
\frac{y^{n}}{(x^2+y^2)^{n}},
\quad (x,y)\in\Hu,
\]
see Guivarc'h, Ji, and Taylor~\cite{GuJiTa}.
Similarly, the family of kernels $\K_\alpha$ are naturally related to the differential operators $D_\alpha$
for $\alpha>-1$, see Theorem \ref{Poissonkernel} below. In preparation for the proof,
we calculate the $L^1$ norm of the function $\K_{\alpha,y}$.

\begin{lemma}\label{L1means}
Let $\alpha>-1$. Let $\K_{\alpha}$ be given by
Definition \ref{def:Poissonkernel}. Then
\[
\lVert \K_{\alpha,y}\rVert_{L^1}=\int \K_{\alpha,y}(x)dx=1,
\]
where $\K_{\alpha,y}$ is defined in accordance with \eqref{uyfcn}.
\end{lemma}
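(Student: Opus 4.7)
The plan is to compute the integral directly by exploiting the scaling in $y$ and then reducing to a beta-function integral. Since $\alpha>-1$ and the integrand is manifestly positive with polynomial decay of order $\alpha+n+1>n$ at infinity, integrability is not an issue, so Fubini and substitution are unrestricted.

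First I would perform the change of variables $x=y\xi$, which gives $dx=y^n\,d\xi$ and, after cancellation of the powers of $y$ coming from $y^{\alpha+1}$ and from $(x^2+y^2)^{(\alpha+n+1)/2}=y^{\alpha+n+1}(1+\xi^2)^{(\alpha+n+1)/2}$, the scale-invariant identity
\[
\int \K_{\alpha,y}(x)\,dx
=\frac{\Gamma((\alpha+n+1)/2)}{\Gamma((\alpha+1)/2)\pi^{n/2}}
\int_{\R^n}\frac{d\xi}{(1+\xi^2)^{(\alpha+n+1)/2}}.
\]
In particular, the integral is independent of $y$.

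Next I would pass to polar coordinates, writing the surface area of the unit sphere in $\R^n$ as $2\pi^{n/2}/\Gamma(n/2)$, and then set $u=r^2$ to reduce the radial integral to a standard beta integral:
\[
\int_{\R^n}\frac{d\xi}{(1+\xi^2)^{(\alpha+n+1)/2}}
=\frac{2\pi^{n/2}}{\Gamma(n/2)}\int_0^\infty\frac{r^{n-1}\,dr}{(1+r^2)^{(\alpha+n+1)/2}}
=\frac{\pi^{n/2}}{\Gamma(n/2)}\,B\!\left(\tfrac{n}{2},\tfrac{\alpha+1}{2}\right).
\]
Both arguments of the beta function are positive because $n\ge 1$ and $\alpha>-1$, so the formula $B(s,t)=\Gamma(s)\Gamma(t)/\Gamma(s+t)$ applies and yields
\[
\int_{\R^n}\frac{d\xi}{(1+\xi^2)^{(\alpha+n+1)/2}}
=\frac{\pi^{n/2}\,\Gamma((\alpha+1)/2)}{\Gamma((\alpha+n+1)/2)}.
\]

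Substituting this back, the gamma factors and the factor $\pi^{n/2}$ cancel against the normalizing constant in Definition \ref{def:Poissonkernel}, leaving $\lVert \K_{\alpha,y}\rVert_{L^1}=1$. There is no real obstacle here; the only point that deserves verification is that the parameter restriction $\alpha>-1$ is precisely what is needed for the beta integral to converge (otherwise $\Gamma((\alpha+1)/2)$ would be undefined or the integral at $r=\infty$ would diverge), which also explains why the normalization constant in the definition of $\K_\alpha$ takes the particular form it does.
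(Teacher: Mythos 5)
Your proof is correct and follows essentially the same route as the paper: reduce to $y=1$ by the scaling $x=y\xi$, pass to polar coordinates, and evaluate the radial integral as a Beta integral (the paper uses the substitution $r=\sqrt{1/t-1}$ to land on $B((\alpha+1)/2,n/2)$ in its $\int_0^1$ form, whereas you use $u=r^2$ to get the $\int_0^\infty u^{s-1}(1+u)^{-s-t}\,du$ form, which is an immaterial difference). The Gamma-function cancellation and the role of $\alpha>-1$ are handled exactly as in the paper.
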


\begin{proof}
Define the auxiliary function $u$ by
\[
u(x,y)=\frac{y^{\alpha+1}}{(x^2+y^2)
^{(\alpha+n+1)/2}},
\quad (x,y)\in\Hu.
\]
In view of Definition \ref{def:Poissonkernel}, the theorem follows 
if we show that $u$ satisfies
\[
\lVert u_y\rVert_{L^1}= \frac{\Gamma((\alpha+1)/2)\pi^{n/2}}{\Gamma((\alpha+n+1)/2)},
\quad y>0,
\]
where $u_y$ is defined in accordance with \eqref{uyfcn}. To this end,
we first note that for each $y>0$ we have $u_y(x)>0$ for all $x\in\R^n$, and
that a change of variables $x/y\mapsto x$ shows that
$\int u_y(x)dx=\int u_1(x)dx$. 
Switching to spherical coordinates, we have
\[
\| u_1\|_{L^1}=\int \frac{dx}{(1+x^2)^{(\alpha+n+1)/2}}=\omega_{n-1}\int_{0}^{\infty}\frac{r^{n-1}dr}{(1+r^2)^{(\alpha+n+1)/2}},
\]
where $\omega_{n-1}=2\pi^{n/2}/\Gamma(n/2)$ is the surface area of the unit sphere $S^{n-1}$.
The change of variables $r=\sqrt{1/t-1}$ and a straightforward computation shows that 
\begin{align*}
2\int_0^\infty\frac{r^{n-1}dr}{(1+r^2)^{(\alpha+n+1)/2}}
&=\int_0^1 t^{(\alpha+1)/2-1}
(1-t)^{n/2-1}\\
&=B((\alpha+1)/2,n/2)
 =\frac{\Gamma((\alpha+1)/2)\Gamma(n/2)}{\Gamma((\alpha+n+1)/2)},
\end{align*}
where $B(x,y)$ is the Beta integral, see Andrews, Askey, and Roy~\cite[Theorem $1.1.4$]{Askey}.
Hence,
\[
\| u_1\|_{L^1}=\frac{\pi^{n/2}}{\Gamma(n/2)}\cdot 2\int_{0}^{\infty}\frac{r^{n-1}dr}{(r^2+1)^{(\alpha+n+1)/2}}
=\frac{\pi^{n/2}\Gamma((\alpha+1)/2)}{\Gamma((\alpha+n+1)/2)},
\]
which completes the proof.
\end{proof}

\begin{thm}\label{Poissonkernel}
Let $\alpha>-1$. Then the function $\K_\alpha$ given by Definition \ref{def:Poissonkernel}
is a solution to the equation $D_\alpha u=0$ in $\Hu$, 
and has the boundary limit $\lim_{y\to0}\K_{\alpha,y}=\delta_0$ 
in $\Es'$, where $\K_{\alpha,y}$ is defined in 
accordance with \eqref{uyfcn}.
\end{thm}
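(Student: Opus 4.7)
The proof splits naturally into two independent pieces, and I would treat them in order.

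The first claim, that $D_\alpha \K_\alpha = 0$ in $\Hu$, is a direct computation. Since $\K_\alpha$ is a positive constant times
\[
u(x,y) = \frac{y^{\alpha+1}}{(x^2+y^2)^{(\alpha+n+1)/2}},
\]
it suffices to verify that $D_\alpha u = 0$, i.e.\ that $\Delta u = (\alpha/y)\partial_y u$ (the factor $y^{-\alpha}$ plays no role here). I would set $s = x^2+y^2$ and compute, in this order: (i) $\sum_{i=1}^n \partial_{x_i}^2 u$, which produces two terms, one of order $s^{-(\alpha+n+3)/2}$ and one of order $s^{-(\alpha+n+5)/2}$; (ii) $\partial_y^2 u$, which produces three terms of orders $s^{-(\alpha+n+1)/2}$, $s^{-(\alpha+n+3)/2}$ and $s^{-(\alpha+n+5)/2}$; and (iii) $-(\alpha/y)\partial_y u$, which cancels the $s^{-(\alpha+n+1)/2}$ term. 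The remaining $s^{-(\alpha+n+5)/2}$ contributions combine via the identity $x^2+y^2 = s$ into a term that exactly cancels the $s^{-(\alpha+n+3)/2}$ contribution, yielding zero. This is nothing more than bookkeeping, so I would not grind through it in the write-up beyond indicating the cancellation structure.

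The second claim, $\lim_{y\to 0}\K_{\alpha,y} = \delta_0$ in $\Es'$, I would prove by recognizing $\{\K_{\alpha,y}\}_{y>0}$ as an approximate identity. The key observation is the scaling relation
\[
\K_{\alpha,y}(x) = y^{-n}\K_{\alpha,1}(x/y), \qquad y>0,\ x\in\R^n,
\]
which follows by factoring $y^{\alpha+n+1}$ out of $(x^2+y^2)^{(\alpha+n+1)/2}$. By Lemma \ref{L1means}, $\K_{\alpha,1}\in L^1$ with $\|\K_{\alpha,1}\|_{L^1}=1$. For $\phi\in\Es$, the substitution $x = yz$ gives
\[
\langle \K_{\alpha,y},\phi\rangle = \int \K_{\alpha,1}(z)\phi(yz)\,dz,
\]
and since $\phi$ is bounded and $\K_{\alpha,1}\in L^1$, dominated convergence yields $\langle \K_{\alpha,y},\phi\rangle \to \phi(0)\int\K_{\alpha,1}(z)\,dz = \phi(0) = \langle\delta_0,\phi\rangle$. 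Testing against every $\phi\in\Es$ is exactly convergence in $\Es'$.

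The main (and really only) obstacle is keeping the bookkeeping straight in the PDE verification; the boundary-limit part is a textbook approximate-identity argument once the scaling relation and Lemma \ref{L1means} are in hand. No additional results beyond Definition \ref{def:Poissonkernel} and Lemma \ref{L1means} are needed.
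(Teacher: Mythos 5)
Your proposal is correct and follows essentially the same route as the paper: verify $D_\alpha\K_\alpha=0$ by direct differentiation, then use the scaling relation $\K_{\alpha,y}(x)=y^{-n}\K_{\alpha,1}(x/y)$ together with $\lVert\K_{\alpha,1}\rVert_{L^1}=1$ from Lemma \ref{L1means} to run the standard approximate-identity argument. The only difference is cosmetic: the paper cites Katznelson/H\"ormander for the approximate-identity step, whereas you write out the dominated-convergence computation explicitly, which is perfectly fine.
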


\begin{proof}
That $D_\alpha \K_\alpha(x,y)=0$ for $(x,y)\in\Hu$
follows by straightforward differentiation.
We proceed to analyze the boundary limit of $\K_\alpha$. For $y>0$ we have
\begin{equation}\label{eq:formulaPoissonkernel1}
\K_{\alpha,y}(x)=\K_\alpha(x,y)=
y^{-n}\K_{\alpha,1}(x/y),
\quad x\in\R^n,
\end{equation}
a fact that was already used in the proof of Lemma \ref{L1means}. By 
the same lemma, the function $\K_{\alpha,1}$ satisfies $\int \K_{\alpha,1}(x)dx=1$.
A standard construction of approximate  
identities now ensures that $\lim_{y\to0}\K_{\alpha,y}=\delta_0$  
in $\Es'$,
see Katznelson~\cite[Section~VI.1.13]{Katznelson} 
or H{\"o}rmander~\cite[Theorem~1.3.2]{Hormander1}.
\end{proof}

By virtue of Definition \ref{def:Poissonkernel} and Theorem \ref{Poissonkernel},
it is clear that the composition
$$
v:(x,y)\mapsto \K_\alpha(rx+t,ry),\quad r>0,\ t\in\R^n,
$$
also solves the equation $D_\alpha v=0$
in $\Hu$. This structural property is in fact shared by all solutions to $D_\alpha u=0$.

\begin{prop}
Let $\alpha>-1$, and let $u$ be a solution to $D_\alpha u=0$ in $\Hu$. Let $r>0$ and $t\in\R^n$ and set $v(x,y)=u(rx+t,ry)$.
Then $D_\alpha v=0$ in $\Hu$.
\end{prop}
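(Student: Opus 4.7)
The plan is to verify the claim by a direct chain rule computation, exploiting the specific scaling structure of $D_\alpha$. The key observation is that the translation by $t$ leaves every partial derivative unchanged, so only the dilation by $r$ produces nontrivial factors. Setting $w(x,y) = u(rx+t, ry)$, the chain rule gives
\[
\partial_{x_i}^2 w(x,y) = r^2 (\partial_{x_i}^2 u)(rx+t, ry), \qquad \partial_y^2 w(x,y) = r^2 (\partial_y^2 u)(rx+t, ry),
\]
and $\partial_y w(x,y) = r (\partial_y u)(rx+t, ry)$.

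From this I would assemble the expression inside the parentheses in the definition \eqref{Dalfa} of $D_\alpha v$. The Laplacian part contributes a factor $r^2$ straightforwardly. For the drift term, note that
\[
-\frac{\alpha}{y} \partial_y w(x,y) = -\frac{\alpha r}{y} (\partial_y u)(rx+t, ry) = r^2 \cdot \left(-\frac{\alpha}{ry}\right) (\partial_y u)(rx+t, ry),
\]
so the drift term also acquires exactly the factor $r^2$ provided we read the coefficient $-\alpha/(ry)$ at the scaled point. Combining,
\[
\Bigl( \sum_{i=1}^n \partial_{x_i}^2 + \partial_y^2 - \frac{\alpha}{y}\partial_y\Bigr) v(x,y) = r^2 \Bigl(\sum_{i=1}^n \partial_{x_i}^2 + \partial_y^2 - \frac{\alpha}{ry}\partial_y\Bigr) u \, (rx+t, ry).
\]

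Finally, multiplying by the weight $y^{-\alpha}$ and comparing with $D_\alpha u$ evaluated at $(rx+t, ry)$, where the weight reads $(ry)^{-\alpha}$, one obtains the scaling identity
\[
D_\alpha v(x,y) = r^{\alpha+2}\, (D_\alpha u)(rx+t, ry).
\]
Since $(rx+t, ry) \in \Hu$ whenever $(x,y) \in \Hu$ (using $r>0$), the hypothesis $D_\alpha u = 0$ in $\Hu$ forces $D_\alpha v \equiv 0$ in $\Hu$, as required. There is no genuine obstacle here; the only bookkeeping subtlety is matching the factors of $r$ between the Laplacian part (which naturally produces $r^2$) and the drift part (which produces $r$ once, but whose denominator $y$ can be rewritten as $ry/r$ to restore the missing factor of $r$), and then absorbing the remaining $r^\alpha$ into the weight discrepancy $y^{-\alpha}$ versus $(ry)^{-\alpha}$.
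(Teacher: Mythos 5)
Your computation is correct and is essentially identical to the paper's own proof: both proceed by the chain rule and arrive at the scaling identity $D_\alpha v(x,y)=r^{\alpha+2}D_\alpha u(rx+t,ry)$, from which the conclusion is immediate. No issues.
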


\begin{proof}
If $(x,y)\in\Hu$ then $(rx+t,ry)\in\Hu$ when $r>0$ and $t\in\R^n$. Differentiation gives
\begin{align*}
D_\alpha v(x,y)&=r^2y^{-\alpha}\Delta u(rx+t,ry)-r\alpha y^{-\alpha-1}\partial u(rx+t,ry)/\partial y
\\& =r^{\alpha+2}((ry)^{-\alpha}\Delta u(rx+t,ry)-\alpha(ry)^{-\alpha-1}\partial u(rx+t,ry)/\partial y)
\\& =r^{\alpha+2}D_\alpha u(rx+t,ry)=0,
\end{align*}
which completes the proof.
\end{proof}

Next we analyze the Fourier transform of $x\mapsto \K_{\alpha,y}(x)$ for $\alpha>-1$, that is, the function
\begin{equation}\label{eq:Ftransformdef}
\xi\mapsto \widehat{\K_{\alpha,y}}(\xi)
=\int e^{-i\langle x,\xi\rangle} \K_{\alpha}(x,y) dx.
\end{equation}
A formula for this function has been obtained before, and can for example be found in
the paper by Baldi, Casadio Tarabusi and Fig{\`a}-Talamanca 
\cite[Section 4]{BaldiEtAl2} for $n=1$
within the framework of hyperbolic Brownian motion with drift,
although this requires some translation between the choice of notation.
(In this context, the terminology {\it characteristic function} is commonly used.)
We therefore prefer to include a direct proof using a different method, and we will indicate
the connection afterwards. A similar approach was used by Byczkowski, Graczyk and St{\'o}s
\cite{ByczkowskiEtAl} in the special case $\alpha=n-1$ corresponding to
classical hyperbolic Brownian motion on $\mathbb H^{n+1}$ (compare with
the discussion in the introduction above).

Recall that $\K_{\alpha,y}$ has bounded $L^1$ norm (independent of $y>0$)
by Lemma \ref{L1means}, so the integral \eqref{eq:Ftransformdef} is absolutely convergent and
$\widehat{\K_{\alpha,y}}(\xi)$
is a continuous function of $\xi$. 
To analyze how 
$\widehat{\K_{\alpha,y}}$
depends on $y$,
note that \eqref{eq:formulaPoissonkernel1} yields the identity
\begin{equation}\label{Fouriertransformidentity}
\widehat{\K_{\alpha,y}}(\xi)
=\int e^{-i\langle x,\xi\rangle}\K_{\alpha,1}(x/y) y^{-n}dx 
=\int  e^{-i\langle x,y\xi\rangle}\K_{\alpha,1}(x)dx
=\widehat{\K_{\alpha,1}}(y\xi).
\end{equation}
Since $x\mapsto \K_{\alpha,1}(x)$ is a radial function, 
the Fourier transform $\eta\mapsto 
\widehat{\K_{\alpha,1}}(\eta)$
is also a radial function.
In fact, let $f_\alpha:[0,\infty)\to\C$ be the $\Ci^\infty$ function defined by
\begin{equation}\label{eq:falfa}
f_\alpha(r)=\frac{\Gamma((\alpha+n+1)/2)}{\Gamma((\alpha+1)/2)\pi^{n/2}}\cdot
\frac{1}{(1+r^2)^{(\alpha+n+1)/2}},
\quad r\ge0,
\end{equation}
so that $\K_{\alpha,1}(x)=f_\alpha(\lvert x\rvert)$, and let $J_\nu$ denote the Bessel function {\it of the first kind} of order $\nu$,
\begin{equation}\label{besselcoefficient.prelim}
J_\nu(z)=\frac{(z/2)^\nu}{\Gamma(\nu+\frac{1}{2})\Gamma(\frac{1}{2})}\int_{-1}^1 e^{izt}(1-t^2)^{\nu-\frac{1}{2}}dt,
\quad \re \nu>-1/2,
\end{equation}
see for example equation (3) on p.~25 in the treatise by Watson~\cite{Watson}. Then we have 
$\widehat{\K_{\alpha,1}}(\eta)
=F_\alpha(|\eta|)$ where
\begin{equation}\label{eq:steinandweiss}
F_\alpha(r)=(2\pi)^{n/2}r^{(2-n)/2}\int_0^\infty f_\alpha(s)s^{n/2} J_{(n-2)/2}(rs)ds,
\end{equation}
see Stein and Weiss~\cite[Chapter 4, Theorem 3.3]{SteinWeiss}. Note that their definition
of the Fourier transform differs from ours by a scaling factor, which explains
the difference in appearance between the formulas.
Thus, for $\alpha>-1$ the Fourier transform of $x\mapsto \K_{\alpha,y}(x)$ can be written as
\begin{equation}\label{aux:Fouriertransform}
\widehat{\K_{\alpha,y}}(\xi)=
\frac{2^{n/2}\Gamma((\alpha+n+1)/2)}{\Gamma((\alpha+1)/2)}
(y\lvert\xi\rvert)^{(2-n)/2}\int_0^\infty \frac{s^{n/2}J_{(n-2)/2}(sy|\xi|)}{(1+s^2)^{
(\alpha+n+1)/2}}ds.
\end{equation}

By using the properties of $\K_\alpha$ given by Theorem \ref{Poissonkernel},
we may (indirectly) evaluate the integral in \eqref{aux:Fouriertransform}.
We thus return to the Fourier transform of $x\mapsto \K_{\alpha,y}(x)$ given by
\eqref{eq:Ftransformdef}. In view of \eqref{Fouriertransformidentity}, it is sufficient
to study the case $y=1$.
Let therefore $F_\alpha$ be the radial function defined above such that
$\widehat{\K_{\alpha,1}}(y\xi)
=F_\alpha(y|\xi|)$.
Since $\widehat{\K_{\alpha,1}}$
is continuous and $\widehat{\K_{\alpha,1}}(\eta)\to0$ as $|\eta|\to\infty$ by the Riemann Lebesgue lemma, we may
identify the map $(\xi,y)\mapsto \widehat{\K_{\alpha,y}}(\xi)=\widehat{\K_{\alpha,1}}(y\xi)=F_\alpha(y|\xi|)$
with a distribution in $\Es'(\Hu)$.
Now $\K_\alpha$ satisfies $D_\alpha \K_\alpha=0$ in $\Hu$, so by a Fourier transformation with respect to the
$x$ variables we obtain
\begin{align}
0&=\F_x((\nabla_x \cdot (y^{-\alpha}\nabla_x)+\partial_y y^{-\alpha}\partial_y)\K_{\alpha,y})(\xi)\\
&=y^{-\alpha}\bigg(\frac{\partial^2 }{\partial y^2}-\frac{\alpha}{y}\frac{\partial }{\partial y}-\xi^2\bigg) F_\alpha(y\lvert\xi\rvert),
\label{Fourierdiffeq}
\end{align}
which we interpret in the distributional sense. However, by H{\"o}rmander~\cite[Theorem 4.4.8]{Hormander1}
any distribution in $\De'(\Hu)$ solving the differential equation in the right-hand side of \eqref{Fourierdiffeq}
is a $\Ci^2$ function of $y$ with values in $\De'$; in our case, the values will even be in $\Es'$.
Indeed, for fixed $y>0$ we can identify $\K_{\alpha,y}$ with a tempered distribution in $\Es'$,
so its Fourier transform also belongs to $\Es'$ which proves the claim.
If we perform the differentiations in \eqref{Fourierdiffeq} we find that
\begin{align*}
0&=y^{-\alpha}\bigg(\frac{\partial^2 }{\partial y^2}-\frac{\alpha}{y}\frac{\partial }{\partial y}-\xi^2\bigg) F_\alpha(y|\xi|)\\
&=y^{-\alpha}\xi^2\bigg(F_\alpha''(y|\xi|)-\frac{\alpha}{y|\xi|}F_\alpha'(y|\xi|)-F_\alpha(y|\xi|)\bigg),
\end{align*}
so we are led to consider the ordinary differential equation
\begin{equation}\label{BesselequationOur}
v''(t)-\frac{\alpha}{t}v'(t)-v(t)=0.
\end{equation}
We therefore digress and recall some well-known facts concerning this equation. For a more thorough discussion
we refer the reader to the work by Watson~\cite{Watson} and the references therein.

If $v$ is a solution to \eqref{BesselequationOur}, set $u(t)=v(t)t^{-\nu}$ with $\nu=\frac{\alpha}{2}+\frac{1}{2}$.
It is straightforward to check that $u$ then solves the equation
\begin{equation}\label{purelyimaginary}
t^2u''(t)+tu'(t)-(t^2+\nu^2)u(t)=0.
\end{equation}
This implies that the general solution
to \eqref{BesselequationOur} is given by $t\mapsto v(t)=t^\nu u(t)$, where $\nu=\frac{\alpha}{2}+\frac{1}{2}$
and $u$ is a general solution to \eqref{purelyimaginary}.
Recall that the pair $I_\nu$ and $K_\nu$ of modified Bessel functions {\it of the third kind}
always form a fundamental system of solutions to \eqref{purelyimaginary},
see Watson~\cite[\S \! 3$\cdot$7]{Watson}.
Here 
\begin{equation}\label{besselcoefficient}
I_\nu(z)=\sum_{m=0}^\infty\frac{(z/2)^{\nu+2m}}{m!\Gamma(\nu+m+1)},\quad\nu\in\C,
\end{equation}
and
\begin{equation}\label{besselcoefficient2}
K_\nu(z)=\frac{\pi}{2}\cdot\frac{I_{-\nu}(z)-I_\nu(z)}{\sin\nu\pi},\quad\nu\in\C.
\end{equation}
Moreover, if $\nu>0$ then $I_\nu(z)$ tends to infinity while
$K_\nu(z)$ tends exponentially to zero as $z\to\infty$ through positive values, see Watson~\cite[\S \! 7$\cdot$23]{Watson}.

\begin{rmk}
Equation \eqref{BesselequationOur} can be derived from Bessel's equation for functions of order $\nu=\frac{\alpha}{2}+\frac{1}{2}$,
\begin{equation}\label{BesselequationStandard}
z^2\frac{d^2 u}{dz^2}+z\frac{du}{dz}+(z^2-\nu^2)u=0,
\end{equation}
by elementary transformations of the
dependent and independent variables. 
Indeed, as we have seen we can transform \eqref{BesselequationOur} to
\eqref{purelyimaginary},
which differs from Bessel's equation only in the coefficient of $u$.
By the change of variables $t\mapsto it$, \eqref{purelyimaginary} is transformed to Bessel's equation
for functions of order $\nu$, so its general solution is given by $z\mapsto u(iz)$ where $u$
is a general solution to \eqref{BesselequationStandard}. This implies that the general solution
to \eqref{BesselequationOur} can also be obtained as $t\mapsto v(t)=t^\nu u(it)$, where $\nu=\frac{\alpha}{2}+\frac{1}{2}$
and $u$ is a general solution to \eqref{BesselequationStandard}.
Since the pair $H_\nu^{(1)}$ and $H_\nu^{(2)}$ of Bessel functions {\it of the third kind}
always form a fundamental system of solutions to Bessel's equation for functions of order $\nu$,
see Watson~\cite[\S \! 3$\cdot$63]{Watson}, 
it is thus possible to represent the Fourier transform of $\K_{\alpha,y}$
also in terms of this pair of solutions (due to considerations of growth,
it turns out to be expressed by means of $H_\nu^{(1)}$, $\nu=\frac{\alpha}{2}+\frac{1}{2}$).
We will not pursue this direction further.
\end{rmk}

\begin{thm}\label{thm:Ftransform}
Let $\alpha>-1$. Then the Fourier transform of $\K_{\alpha,y}$ given by \eqref{eq:Ftransformdef}
can be expressed in terms of Bessel functions,
\begin{equation}\label{eq:Ftransform}
\widehat{\K_{\alpha,y}}(\xi)
=\frac{2^{(1-\alpha)/2}}{\Gamma({(\alpha+1)/2})}
(y|\xi|)^{(\alpha+1)/2}
K_{(\alpha+1)/2}(y|\xi|),
\quad y>0,
\end{equation}
where $K_{(\alpha+1)/2}$ is the modified Bessel function of the third kind of order $(\alpha+1)/2$.
\end{thm}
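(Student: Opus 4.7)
The plan is to combine the differential equation analysis developed in the excerpt with the classical asymptotic behavior of the modified Bessel functions. By the identity \eqref{Fouriertransformidentity} it suffices to determine the radial profile $F_\alpha$ defined so that $\widehat{\K_{\alpha,1}}(\eta)=F_\alpha(|\eta|)$. The preceding computation has already shown that $F_\alpha$ satisfies the ODE \eqref{BesselequationOur} in the sense of distributions on $(0,\infty)$, and the application of H\"ormander's regularity result cited above promotes $F_\alpha$ to a genuine $\Ci^2$ solution on $(0,\infty)$. Thus, by the discussion immediately preceding the theorem, $F_\alpha(t)=t^\nu u(t)$ with $\nu=(\alpha+1)/2$, where $u$ solves the modified Bessel equation \eqref{purelyimaginary}. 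Since $I_\nu$ and $K_\nu$ form a fundamental system of solutions to \eqref{purelyimaginary}, we obtain the representation
\begin{equation*}
F_\alpha(t)=A\,t^{\nu}I_\nu(t)+B\,t^{\nu}K_\nu(t),\quad t>0,
\end{equation*}
for some constants $A,B\in\C$ to be determined.

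Next I would pin down $A=0$ using decay at infinity. By Lemma \ref{L1means}, $\K_{\alpha,1}\in L^1$, so the Riemann--Lebesgue lemma gives $F_\alpha(t)\to0$ as $t\to\infty$. As recalled from Watson~\cite{Watson}, for $\nu>0$ one has $I_\nu(t)\sim e^{t}/\sqrt{2\pi t}$ while $K_\nu(t)\sim\sqrt{\pi/(2t)}\,e^{-t}$ as $t\to\infty$. Consequently $t^\nu I_\nu(t)$ blows up exponentially while $t^\nu K_\nu(t)\to0$, which forces $A=0$.

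Finally I would determine $B$ by normalization at the origin. Since $\K_{\alpha,1}\in L^1$ with $\|\K_{\alpha,1}\|_{L^1}=1$ by Lemma~\ref{L1means}, the Fourier transform $\widehat{\K_{\alpha,1}}$ is continuous and equals $1$ at the origin, so $F_\alpha(0)=1$. The classical small-argument asymptotic $K_\nu(t)\sim 2^{\nu-1}\Gamma(\nu)\,t^{-\nu}$ as $t\to 0^+$ for $\nu>0$ (again from Watson) yields $t^\nu K_\nu(t)\to 2^{\nu-1}\Gamma(\nu)$. Matching gives $B=2^{1-\nu}/\Gamma(\nu)=2^{(1-\alpha)/2}/\Gamma((\alpha+1)/2)$, and substituting back into $\widehat{\K_{\alpha,y}}(\xi)=F_\alpha(y|\xi|)$ produces the stated formula.

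The only delicate point I anticipate is ensuring that the representation $F_\alpha(t)=At^\nu I_\nu(t)+Bt^\nu K_\nu(t)$ is legitimate over the whole positive half line (rather than only locally away from the regular singular point at $t=0$); this is handled by invoking that $\{I_\nu,K_\nu\}$ is a fundamental system on all of $(0,\infty)$, together with the regularity upgrade from H\"ormander cited above. Once that is in place, the matching of asymptotics at $0$ and $\infty$ is routine.
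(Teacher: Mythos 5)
Your proposal is correct and follows essentially the same route as the paper: reduce to the radial profile $F_\alpha$, use the ODE \eqref{BesselequationOur} to write $F_\alpha(t)=At^\nu I_\nu(t)+Bt^\nu K_\nu(t)$ with $\nu=(\alpha+1)/2$, kill $A$ by boundedness/decay at infinity, and fix $B$ by the normalization $F_\alpha(0^+)=1$. The one place you improve on the paper is the determination of $B$: by invoking the small-argument asymptotic $K_\nu(t)\sim 2^{\nu-1}\Gamma(\nu)t^{-\nu}$, valid for every $\nu>0$ including integers, you avoid the paper's reliance on the formula $K_\nu=\tfrac{\pi}{2}(I_{-\nu}-I_\nu)/\sin\nu\pi$ (which degenerates when $\nu\in\Z_+$) and hence dispense with its separate continuity argument for odd positive integer values of $\alpha$; likewise, reading off $F_\alpha(0)=\int\K_{\alpha,1}=1$ directly from Lemma \ref{L1means} is cleaner than the paper's detour through the distributional boundary limit of Theorem \ref{Poissonkernel}.
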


\begin{proof}
To shorten notation, let $\nu=\frac{\alpha}{2}+\frac{1}{2}$ and note that $\nu>0$ by assumption.
Let $F_\alpha$ be the function satisfying $F_\alpha(y|\xi|)=
\widehat{\K_{\alpha,1}}(y\xi)$.
By the discussion preceding the theorem, it follows that
since $F_{\alpha}$ solves \eqref{BesselequationOur}, we have
\begin{equation}\label{AandBconstants}
F_\alpha(r)=A_\alpha r^\nu I_\nu(r)
+ B_\alpha r^\nu K_\nu(r),\quad r\ge0,
\end{equation}
for some constants $A_\alpha$ and $B_\alpha$ which are to be determined.
Since $\widehat{\K_{\alpha,1}}$
is bounded by virtue of Lemma \ref{L1means}, and $K_\nu(r)$ tends exponentially to zero while $I_\nu(r)\to\infty$  
as $r\to\infty$, the coefficient of
$I_\nu$ in \eqref{AandBconstants} must vanish, that is, $A_\alpha=0$.

To determine $B_\alpha$, suppose first that $\alpha>-1$ is not an odd positive integer,
so that $\nu\notin\Z_+$. 
By an application of Theorem \ref{Poissonkernel} we have that
$\widehat{\K_{\alpha,y}}\to1$ in $\Es'$ as $y\to0$. Since
$\widehat{\K_{\alpha,y}}(\xi)$
is a continuous function of $\xi$
we find that $\widehat{\K_{\alpha,y}}(\xi)\to1$
as $y\to0$ for all $\xi\in\R$, which by virtue of \eqref{Fouriertransformidentity} implies
that $\widehat{\K_{\alpha,1}}(y)=\widehat{\K_{\alpha,y}}(1)\to1$ as $y\to0$.
In particular, this means that $F_\alpha(r)\to 1$ as $r\to0$.
In view of \eqref{besselcoefficient} and \eqref{besselcoefficient2} this gives
\[
1=\frac{B_\alpha 2^{\nu-1}\pi}{\sin\nu\pi\, \Gamma(1-\nu)}
\]
since $\nu>0$.
Invoking Euler's reflection formula $\Gamma(z)\Gamma(1-z)\sin \pi z=\pi$
we find that $B_\alpha=2^{1-\nu}/\Gamma(\nu)$.
Since $\nu=\frac{\alpha}{2}+\frac{1}{2}$,
this completes the proof in the case when $\alpha$ is not an odd positive integer.
In view of Definition \ref{def:Poissonkernel},
an application of the dominated convergence theorem shows that
$\widehat{\K_{\alpha,1}}(\xi)\to \widehat{\K_{2k-1,1}}(\xi)$ as $\alpha\to 2k-1$.
By Watson~\cite[\S \! 3$\cdot$7]{Watson}, $K_\nu(z)\to K_{k}(z)$ as $\nu=\frac{\alpha}{2}+\frac{1}{2}\to k$,
so the general case follows by continuity. This completes the proof.
\end{proof}

Note that since $K_\nu(z)$ is an analytic function of $z$,
Theorem \ref{thm:Ftransform} shows that the Fourier transform of $\K_{\alpha,y}$
fails to be smooth at the origin. That $\widehat{\K_{\alpha,y}}$ cannot be smooth
on all of $\R^n$ can of course also been seen directly from Definition \ref{def:Poissonkernel}
in view of how the Fourier transform relates integrability and regularity.
We also remark that $K_{1/2}(z)=\sqrt{\frac{\pi}{2z}}e^{-z}$,
so when $\alpha\to0$ we recover the Fourier transform
of the Poisson kernel for the upper half space.
Furthermore, a comparison with formulas (4.5) and (4.6) in
Baldi et al.~\cite{BaldiEtAl2}
(using $\nu=\frac{\alpha}{2}+\frac{1}{2}$) shows that we recover their result
concerning the Fourier transform of the Poisson kernel
of the infinitesimal generator associated to \eqref{HyperbolicBrownianMotion} in the case $n=1$.
The special case $\alpha=n-1$ appears in
Byczkowski et al.~\cite{ByczkowskiEtAl}, see the proof of their Theorem 2.1.

Next, we give an integral representation of
$\widehat{\K_{\alpha,y}}$. There is of course a wide variety of forms of the Bessel function $K_\nu$
which can be used to express \eqref{eq:Ftransform},
but we will content ourselves with the following result which
proves to be useful later.

\begin{cor}\label{cor:Ftransform}
Let $\alpha>-1$. Then the Fourier transform of $\K_{\alpha,y}$ 
given by \eqref{eq:Ftransformdef}
can be expressed as
\begin{equation}\label{eq:Ftransformasintegral}
\widehat{\K_{\alpha,y}}(\xi)
=\frac{(y\lvert\xi\rvert)^{\alpha+1}}{\Gamma(\alpha+1)}
\int_1^\infty e^{-y\lvert\xi\rvert t} (t^2-1)^{\alpha/2}dt,
\quad y>0.
\end{equation}
\end{cor}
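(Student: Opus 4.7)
The plan is to combine the Bessel-function formula from Theorem~\ref{thm:Ftransform} with a classical integral representation of $K_\nu$. Write $\nu = (\alpha+1)/2$ and $z = y\lvert\xi\rvert$, so that Theorem~\ref{thm:Ftransform} reads
\[
\widehat{\K_{\alpha,y}}(\xi) = \frac{2^{1/2-\nu}}{\Gamma(\nu)}\, z^\nu K_\nu(z),\qquad z>0.
\]
First I would recall from Watson's treatise (the same reference already used for \eqref{besselcoefficient.prelim} and \eqref{besselcoefficient}) the representation
\[
K_\nu(z) = \frac{\sqrt{\pi}\,(z/2)^\nu}{\Gamma(\nu+\tfrac{1}{2})} \int_1^\infty e^{-zt}(t^2-1)^{\nu-1/2}\,dt,
\]
valid for $\re \nu > -1/2$ and $\re z > 0$, which certainly covers our range $\nu = (\alpha+1)/2 > 0$ and $z = y\lvert\xi\rvert>0$. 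Substituting this into the display above yields
\[
\widehat{\K_{\alpha,y}}(\xi) = \frac{2^{1/2-\nu}\sqrt{\pi}}{\Gamma(\nu)\,\Gamma(\nu+\tfrac{1}{2})}\cdot \frac{z^{2\nu}}{2^\nu} \int_1^\infty e^{-zt}(t^2-1)^{\nu-1/2}\,dt.
\]

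The remaining task is to identify the prefactor with $1/\Gamma(\alpha+1)$. Since $\alpha+1 = 2\nu$ and $\alpha/2 = \nu - 1/2$, this amounts to checking that
\[
\frac{2^{1/2-2\nu}\sqrt{\pi}}{\Gamma(\nu)\,\Gamma(\nu+\tfrac{1}{2})} = \frac{1}{\Gamma(2\nu)},
\]
which is precisely Legendre's duplication formula $\Gamma(2\nu) = (2\pi)^{-1/2}\,2^{2\nu-1/2}\,\Gamma(\nu)\,\Gamma(\nu+\tfrac{1}{2})$. Once this bookkeeping is done, \eqref{eq:Ftransformasintegral} follows.

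The only genuine obstacle is ensuring that the integral representation of $K_\nu$ is applicable at the boundary value $\nu = 1/2$ (i.e.\ $\alpha = 0$) and for the odd-integer values $\nu \in \Z_+$ that were handled separately in the proof of Theorem~\ref{thm:Ftransform}; but since the representation is valid on the whole half-plane $\re \nu > -1/2$ and both sides of \eqref{eq:Ftransformasintegral} are continuous in $\alpha$ (the integral converges uniformly on compacts in $\alpha > -1$ because $(t^2-1)^{\alpha/2} e^{-zt}$ is dominated near $t=1$ by an integrable function and decays exponentially at infinity), no separate argument is needed. The remainder is just the gamma-function identity above.
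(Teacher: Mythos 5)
Your argument is the paper's proof: the same Watson integral representation of $K_\nu$ (Watson, \S6.15, identity (4)) substituted into Theorem~\ref{thm:Ftransform}, with Legendre's duplication formula absorbing the constants. The only problem is a transcription slip: the constant in Theorem~\ref{thm:Ftransform} is $2^{(1-\alpha)/2}=2^{1-\nu}$, not $2^{1/2-\nu}$, so the identity you actually need is
\[
\frac{2^{1-2\nu}\sqrt{\pi}}{\Gamma(\nu)\,\Gamma(\nu+\tfrac{1}{2})}=\frac{1}{\Gamma(2\nu)},
\]
which \emph{is} the duplication formula you quote, whereas the identity you wrote down to be checked differs from it by a factor of $\sqrt{2}$ and is therefore false as stated. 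With the exponent corrected, the bookkeeping closes and the proof coincides with the one in the paper; your closing remark about continuity in $\alpha$ is harmless but unnecessary, since the integral representation is valid for all $\re\nu>-\tfrac{1}{2}$ and no values of $\alpha>-1$ need separate treatment here.
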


\begin{proof}
By Watson~\cite[\S \! 6$\cdot$15]{Watson}, identity $(4)$, we have the representation
\[
K_\nu(z)=\frac{\Gamma(\frac{1}{2})(\frac{1}{2}z)^\nu}{\Gamma(\nu+\frac{1}{2})}\int_1^\infty e^{-zt}(t^2-1)^{\nu-\frac{1}{2}}dt.
\]
In view of the duplication formula $\Gamma(\nu)\Gamma(\nu+\frac{1}{2})=
2^{1-2\nu}\Gamma(\frac{1}{2})\Gamma(2\nu)$, the result is now an immediate consequence of Theorem \ref{thm:Ftransform}.
\end{proof}

\section{Weighted spaces of distributions}\label{section:Weighteddistributions}

In this section we recall certain facts concerning weighted spaces of distributions,
and recall the definition of the $\Es'$-convolution.
We also prove some auxiliary results that will be used in the next section when
we solve the Dirichlet problem \eqref{Dirichletproblem}.
We mention that the weighted spaces of distributions that we will consider appear naturally
in the context of Newtonian potentials of distributions, see Schwartz~\cite{Schwartz}, and have subsequently been studied
by many authors. They were recently used
in Alvarez et al.~\cite{Alvarezetal1} to characterize the tempered distributions that are $\Es'$-convolvable with the classical Poisson kernel
for the half space, and
in Alvarez et al.~\cite{Alvarezetal2}
to study harmonic extensions of distributions.
For further details on how these spaces appear, as well as on the
$\Es'$-convolution and other notions of convolution of tempered distributions,
we refer to the mentioned papers and the references therein.

We begin by recalling the definitions and some properties of spaces of distributions considered
by Laurent Schwartz. For details we refer to Schwartz~\cite[Chapitre VI, \S 8]{Schwartz}.
To make the notation less cumbersome we will as before assume that all the spaces under consideration below are
defined on $\R^n$ unless explicitly stated otherwise. We let $\De_{L^p}$ denote the
vector space of smooth functions $\varphi\in\Ci^\infty$ such that all derivatives $\partial^\beta\varphi$
belong to $L^p$. We endow $\De_{L^p}$ with the topology in which a sequence $\{\varphi_j\}_{j=1}^\infty$
converges to 0 in $\De_{L^p}$ if $\partial^\beta\varphi_j\to 0$ in $L^p$ for all multi-indices $\beta$. 
$\De_{L^p}$ is then a locally convex, complete topological vector space. We employ the notation
$\Bo$ for the special case $p=\infty$, that is, $\Bo=\De_{L^\infty}$ is the space of
smooth functions $\varphi:\R^n\to\C$ such that $\varphi$ and all derivatives $\partial^\beta\varphi$
are bounded. We will let $\Bop$ denote the closed subspace consisting of those elements $\varphi\in\Bo$ such that
$\partial^\beta\varphi(x)\to0$ as $|x|\to\infty$ for all multi-indices $\beta$.
We have the continuous strict inclusions
\[
\De_{L^p}\subset \De_{L^q}\subset \Bop,\quad 1\le p< q<\infty.
\]
Moreover, the space $\Ci_0^\infty$ of compactly supported smooth functions is dense in $\De_{L^p}$ for $1\le p<\infty$, and in $\Bop$,
but $\Ci_0^\infty$ is not dense in $\Bo$.
For this reason we will also endow $\Bo$ with the finest locally convex topology that on bounded subsets of $\Bo$
induces the topology inherited from $\Ci^\infty$, and this space
will be denoted by $\Boc$. We have that $\Ci_0^\infty$ is dense in $\Boc$, and
since $\Ci_0^\infty$ is also dense in $\Bop$, it follows that $\Bop$ is dense in $\Boc$.

For $1<p\le\infty$ we let $\De_{L^p}'$ denote the dual of $\De_{L^{p'}}$ where $p'$ is
the conjugate exponent of $p$, and we let $\De_{L^1}'$ denote the dual of
$\Bop$. Due to the dense inclusions $\Ci_0^\infty\subset\De_{L^{p'}}\subset\Bop$
for $1\le p'<\infty$, it follows that $\De_{L^p}'$ is a space of distributions for $1\le p\le\infty$.
(This is not true for the dual of $\Bo$.) We have the continuous strict inclusions
$\De_{L^p}\subset L^p\subset\De_{L^p}'$, as well as
$\De_{L^p}'\subset \De_{L^q}'$ for $p< q$.

By Schwartz~\cite[Chapitre VI, Th{\'e}or{\`e}me XXV]{Schwartz}, a distribution $u$ belongs to $\De_{L^p}'$
if and only if the regularization $u\ast\varphi$ belongs to $\De_{L^p}$
for every $\varphi\in\Ci_0^\infty$, where the convolution is defined in the usual distributional sense.
Moreover, $u$ belongs to $\De_{L^p}'$ if and only if $u$ can be represented as a finite sum
\[
u=\sum_\beta\partial^\beta u_\beta,\quad u_\beta\in L^p,
\]
where the derivatives are interpreted in the distributional sense. Hence,
$\De_{L^p}'$ is continuously embedded in the space $\Es'$ of tempered distributions.
Consider now the case $p=1$, and let $u\in\De_{L^1}'$. Representing $u$ as a finite sum of
distributional derivatives of integrable functions allows for
$u$ to be extended to a continuous linear form on $\Boc$. Since $\Bop$ is dense in $\Boc$,
the extension is unique. Hence $\De_{L^1}'$ is the dual of $\Boc$, see Schwartz~\cite[p.~203]{Schwartz}. Moreover,
if $\langle\phantom{i},\phantom{i}\rangle_{\mathscr V',\mathscr V}$ in general denotes the duality pairing
between a topological space $\mathscr V$ and its dual $\mathscr V'$, the integral of $u\in\De_{L^1}'$ is well defined in the sense that
\[
\langle u,1\rangle_{\De_{L^1}',\Boc}=\sum_\beta (-1)^{|\beta|}\int u_\beta(x)\partial^\beta 1 dx=\int u_0(x)dx.
\]
If $u\in\De_{L^1}'$ also belongs to $L^1$, then $\langle u,1\rangle_{\De_{L^1}',\Boc}$ coincides with the integral of $u$.
The distributions in $\De_{L^1}'$ are therefore sometimes called integrable distributions.

\begin{dfn}\label{def:weightedspace}
Let $w:\R^n\to\R$ be defined by $w(x)=(1+x^2)^{\frac{1}{2}}$, and let $\mu$ and $p$ be real parameters
with $1\le p<\infty$. We define the weighted space of distributions $w^\mu\De_{L^p}'$ as
\[
w^\mu\De_{L^p}'=\{u\in\Es':w^{-\mu}u\in\De_{L^p}'\}
\]
with the topology induced by the map from $w^{\mu}\De_{L^p}'$ to $\De_{L^p}'$ given by $u\mapsto w^{-\mu}u$.
\end{dfn}

Note that $(x,\xi)\mapsto w^\mu(\xi)$ belongs the Kohn-Nirenberg symbol class
of order $\mu$, that is, for any multi-indices $\beta$ and $\gamma$
we can find a constant $C_{\beta,\gamma}$ such that
\begin{equation}\label{symbol}
|\partial_x^\beta\partial_\xi^\gamma w^\mu(\xi)|\le C_{\beta,\gamma}(1+|\xi|)^{\mu-|\gamma|},\quad\xi\in\R^n,
\end{equation}
see H{\"o}rmander~\cite[Definition 18.1.1]{Hormander3}. In particular, $w^\mu$
is a so-called order function for any $\mu\in\R$, that is, $\partial^\beta w^\mu=\mathcal O(w^\mu)$ for any multi-index $\beta$.
Since differentiation is a linear continuous operation
on $\De_{L^p}'$, see Schwartz~\cite[p.~200]{Schwartz}, 
it follows that the space $w^\mu\De_{L^p}'$ is also closed under differentiation.

For $1<p<\infty$, the space $w^\mu\De_{L^p}'$ is the dual of $w^{-\mu}\De_{L^{p'}}$ where
$p'$ is the conjugate exponent of $p$. The space $w^\mu\De_{L^1}'$ is the dual of $w^{-\mu}\Bop$ and $w^{-\mu}\Boc$.
Here the weighted spaces $w^{-\mu}\De_{L^{p'}}$ are defined in the analog way as the functions $\varphi\in\Ci^\infty$
such that $w^\mu\varphi\in \De_{L^{p'}}$.
The duality is naturally given by
\[
\langle u, \varphi\rangle_{w^\mu\De_{L^p}',w^{-\mu}\De_{L^{p'}}}
=\langle w^{-\mu}u, w^\mu\varphi\rangle_{\De_{L^p}',\De_{L^{p'}}},
\]
and similarly for  $w^{-\mu}\Bop$ and $w^{-\mu}\Boc$.
However, since $w^\mu$ is an order function it is easy to see that
$\varphi\in w^{\mu}\De_{L^{p}}$ if and only if 
$w^{-\mu}\partial^\beta\varphi\in L^{p}$ for all multi-indices $\beta$, that is,
\[
w^{\mu}\De_{L^{p}}= \De_{L^{p}(w^{-\mu p})},
\]
where $L^{p}(w^{-\mu p})=L^{p}(w^{-\mu p}(x)dx)$ is the weighted $L^{p}$ space defined in the natural way.
It follows that $L^p(w^{-\mu p})$ is continuously embedded in $w^\mu\De_{L^p}'$ for $\mu\in\R$ and $1\le p<\infty$. In particular
\begin{equation}\label{L1inclusions}
w^\mu\De_{L^1}\subset L^1(w^{-\mu })\subset w^\mu\De_{L^1}',\quad\mu\in\R.
\end{equation}
Conversely, we recall the following useful representation formula for distributions in $w^\mu\De_{L^p}'$, see Alvarez et al.~\cite[Lemma 3.3]{Alvarezetal2}
and~\cite[Remark 3.4]{Alvarezetal2}. For $\mu\in\R$ and $1\le p<\infty$ we have that
\begin{equation}\label{representationformula}
w^\mu\De_{L^p}'=\bigg\{u\in\Es':u=\sum_\beta \partial^\beta u_\beta,\quad u_\beta\in L^p(w^{-\mu p})\bigg\},
\end{equation}
where the summation is over a finite set.
Pointwise multiplication is well defined and continuous from $\Bo\times\Bo$ to $\Bo$ and from
$\Bop\times\Bo$ to $\Bop$, so $\De_{L^1}'$ is closed under multiplication by functions in $\Bo$.
Using the representation formula \eqref{representationformula} for $p=1$, it follows that we have the continuous strict inclusions
\[
w^{\mu_1}\De_{L^1}'\subset w^{\mu_2}\De_{L^1}'\subset\Es',\quad\mu_1<\mu_2.
\]

We next recall the definition of the so-called $\Es'$-convolution
proposed by Hirata and Ogata~\cite{HirataOgata}. The definition was given an equivalent form by Shiraishi~\cite{Shiraishi}, 
which is the one we will use.

\begin{dfn}\label{def:convolution}
Two tempered distributions $u$ and $v$ in $\Es'$ are said to be $\Es'$-convolvable
if the multiplicative product $u(\check v\ast\varphi)$ belongs to $\De_{L^1}'$ for every $\varphi\in\Es$.
Then the map from $\Es$ to $\C$ given by
\[
\varphi\mapsto\langle u(\check v\ast\varphi),1\rangle_{\De_{L^1}',\Boc}
\]
is linear and continuous, and thus defines a tempered distribution denoted by $u\ast v$.
\end{dfn}
Here, $\check\varphi(x)=\varphi(-x)$ for $\varphi\in\Es$ and we extend this operation to $\Es'$
by duality. If $\tau_\eta\varphi(x)=\varphi(x-\eta)$ denotes translation, then  $\check v\ast\varphi$ is the function
\[
\eta\mapsto\langle\check v,\tau_\eta\check\varphi\rangle_{\Es',\Es}
=\langle v,\tau_{-\eta}\varphi\rangle_{\Es',\Es}.
\]
We remark that when defined, the $\Es'$-convolution of $u$ and $v$
is commutative, and satisfies the Fourier exchange formula $(u\ast v)\hat {\ }=\hat u\hat v$.
The notation $u\ast v$ for the $\Es'$-convolution of $u$ and $v$
is justified by the fact that Definition \ref{def:convolution} coincides with the usual
definition of convolution in the sense of distributions
whenever the latter definition is applicable.

We now turn to the problem of finding the optimal class of tempered distributions that are
$\Es'$-convolvable with the kernel $\K_{\alpha,y}$. To simplify the notation below, we introduce the function $\wa$ given by
\begin{equation}\label{w-alfa}
\wa(x)=w^{\alpha+n+1}(x)
=(1+x^2)^{(\alpha+n+1)/2},\quad x\in\R^n.
\end{equation}
We will thus not signify the dependence on the dimension in the notation.
We remark that $\wa^{-1}$ is modulo a scaling factor equal to $\K_{\alpha,1}$.

Our goal in this section is to prove the following result, which contains Alvarez et al.~\cite[Theorem 10]{Alvarezetal1}
as the special case $\alpha=0$.

\begin{thm}\label{thm:convolutionwelldefined}
Let $\alpha>-1$. Let $\wa$ be given by \eqref{w-alfa}, and let $f\in \Es'$. Then the following
assertions are equivalent:
\begin{itemize}
\item[$(\mathrm{i})$] $f\in \wa\De_{L^1}'$.
\item[$(\mathrm{ii})$] $f$ is $\Es'$-convolvable with $\K_{\alpha,y}$ for each $y>0$.
\end{itemize}
Furthermore, if $f\in \wa\De_{L^1}'$ then $\K_{\alpha,y}\ast f$ is the function on $\R^n$
given by
\begin{equation}\label{convolutionformula}
\eta\mapsto\langle \wa^{-1}f,\wa \tau_\eta \K_{\alpha,y}\rangle_{\De_{L^1}',\Boc}
\end{equation}
for each $y>0$.
\end{thm}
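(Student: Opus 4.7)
The plan is to reduce both implications to the algebraic fact that $\De_{L^1}'$ is closed under multiplication by $\Bo$. The main preliminary observation, used in both directions, is that $\K_{\alpha,y}$ has pointwise polynomial decay matching $\wa^{-1}$: from Definition~\ref{def:Poissonkernel} one reads off $|\partial^\gamma\K_{\alpha,y}(x)|\le C_{\gamma,y}\wa^{-1}(x)$ for all $x\in\R^n$. For $\varphi\in\Es$, splitting $\K_{\alpha,y}\ast\varphi$ over the regions $|z|\le|x|/2$ and $|z|>|x|/2$ and using the rapid decay of $\partial^\beta\varphi$ then gives $|\partial^\beta(\K_{\alpha,y}\ast\varphi)(x)|\le C_{\beta,y,\varphi}\wa^{-1}(x)$. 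Combined with the order-function bound $|\partial^\beta\wa|\le C_\beta\wa$ from \eqref{symbol}, the Leibniz rule produces $\wa\cdot(\K_{\alpha,y}\ast\varphi)\in\Bo$; by the same argument, $\wa\cdot\tau_\eta\K_{\alpha,y}\in\Bo$ with constants locally uniform in $\eta$. Finally, $\K_{\alpha,y}(x)$ depends on $x$ only through $x^2=|x|^2$, so it is even and $\check{\K}_{\alpha,y}\ast\varphi=\K_{\alpha,y}\ast\varphi$.

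For (i)$\Rightarrow$(ii), let $g=\wa^{-1}f\in\De_{L^1}'$ and decompose
\[
f\cdot(\check{\K}_{\alpha,y}\ast\varphi)=g\cdot\bigl(\wa(\K_{\alpha,y}\ast\varphi)\bigr),
\]
which lies in $\De_{L^1}'$ since that space is closed under multiplication by $\Bo$; this is $\Es'$-convolvability. For the formula \eqref{convolutionformula}, I view
\[
\wa(\K_{\alpha,y}\ast\varphi)=\int \wa\cdot\tau_\eta\K_{\alpha,y}\,\varphi(\eta)\,d\eta
\]
as a $\Boc$-valued integral in $\eta$, and pull the pairing $\langle g,\cdot\rangle_{\De_{L^1}',\Boc}$ inside the integral using continuity of the pairing and Riemann-sum approximation. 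Combined with Definition \ref{def:convolution} and the identity $\langle g\cdot h,1\rangle_{\De_{L^1}',\Boc}=\langle g,h\rangle_{\De_{L^1}',\Boc}$ for $h\in\Bo$, this yields
\[
\langle\K_{\alpha,y}\ast f,\varphi\rangle=\int\langle g,\wa\tau_\eta\K_{\alpha,y}\rangle_{\De_{L^1}',\Boc}\varphi(\eta)\,d\eta,
\]
which identifies $\K_{\alpha,y}\ast f$ with the continuous function $\eta\mapsto\langle g,\wa\tau_\eta\K_{\alpha,y}\rangle$.

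For (ii)$\Rightarrow$(i), fix any $y_0>0$ and choose $\varphi_0\in\Ci_0^\infty$ with $\varphi_0\ge 0$ and $\int\varphi_0>0$; set $\psi=\K_{\alpha,y_0}\ast\varphi_0$. The two-sided comparison
\[
c_1\wa^{-1}(x)\le\psi(x)\le c_2\wa^{-1}(x),\quad x\in\R^n,
\]
follows because for $|x|$ large and $z\in\supp\varphi_0$ the ratio $\K_{\alpha,y_0}(x-z)/\K_{\alpha,y_0}(x)$ is bounded above and below by positive constants (directly from the explicit formula), while on compact sets both $\psi$ and $\wa^{-1}$ are continuous and strictly positive. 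Combined with $|\partial^\beta\psi(x)|\le C_\beta\wa^{-1}(x)$ from the first paragraph, this yields $q:=\wa\psi\in\Bo$ with $c_1\le q\le c_2$; smoothness of $q$, boundedness of all its derivatives, and the lower bound $q\ge c_1>0$ together imply that all derivatives of $1/q$ are bounded, so $1/q\in\Bo$. The hypothesis gives $f\psi=f(\check{\K}_{\alpha,y_0}\ast\varphi_0)\in\De_{L^1}'$, and the factorization $\wa^{-1}f=(1/q)(f\psi)$ exhibits $\wa^{-1}f$ as a product $\Bo\cdot\De_{L^1}'\subset\De_{L^1}'$, i.e., $f\in\wa\De_{L^1}'$. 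I expect the main obstacle to be precisely this last direction, and within it the two-sided comparison $\psi\sim\wa^{-1}$ together with the attendant derivative estimates that deliver $1/q\in\Bo$; once those are in hand, the algebra of $\Bo$-multipliers on $\De_{L^1}'$ completes the argument.
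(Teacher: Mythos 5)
Your proof is correct. The forward direction (i)$\Rightarrow$(ii), including the derivation of \eqref{convolutionformula}, follows essentially the same path as the paper: establish $\wa(\K_{\alpha,y}\ast\varphi)\in\Bo$ (the paper gets the decay via Peetre's inequality and a split at $|t|=y$, you via a dyadic split at $|z|=|x|/2$; both are routine), then use that $\De_{L^1}'$ is a $\Bo$-module, and finally exchange the pairing with the integral (the paper does this through the representation formula \eqref{representationformula} and Fubini, citing Alvarez et al.; your Riemann-sum argument in $\Boc$ is a standard substitute, though you should check that the sums actually converge in the $\Boc$ topology, i.e.\ are bounded in $\Bo$ and converge in $\Ci^\infty$ on compacts).

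The converse (ii)$\Rightarrow$(i) is where you genuinely diverge. The paper invokes the external characterization (Alvarez et al., Proposition 9) that $f\in\wa\De_{L^1}'$ iff $f=f_1+M_{\alpha,n}f_2$ with $f_1$ compactly supported and $f_2\in\De_{L^1}'$ supported away from the origin; it then builds this decomposition with a cutoff $\chi$, needing only a \emph{lower} bound for $\K_{\alpha,y}\ast\psi$ on $\supp(1-\chi)$ and handling the singular factor $\wa/|x|^{\alpha+n+1}$ away from $0$. You instead verify Definition \ref{def:weightedspace} directly: the strict positivity of $\K_{\alpha,y_0}$ everywhere gives the \emph{global} two-sided comparison $\psi\asymp\wa^{-1}$, hence $q=\wa\psi\in\Bo$ with $q\ge c_1>0$ and so $1/q\in\Bo$, and the single factorization $\wa^{-1}f=(1/q)(f\psi)$ finishes. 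Your route is more self-contained (no appeal to the $f_1+M_{\alpha,n}f_2$ characterization, no cutoff), at the cost of having to control $\psi$ and all its derivatives down to the origin --- which is harmless here precisely because the kernel is strictly positive and smooth on all of $\R^n$ for fixed $y_0>0$. Both arguments use only a single $y_0$ and a single test function, so both in fact prove the slightly stronger statement that convolvability for one $y$ already forces $f\in\wa\De_{L^1}'$.
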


\begin{proof}
We shall adapt a combination of the proofs of Alvarez et al.~\cite[Proposition 7]{Alvarezetal1} and~\cite[Theorem 10]{Alvarezetal1}
in the presence of a parameter $\alpha>-1$. 
Assume first that (i) holds, and let $f=\wa u$ for some $u\in\De_{L^1}'$.
To prove that the $\Es'$-convolution $\K_{\alpha,y}\ast f$ is well defined, we must according to Definition \ref{def:convolution} show that
the multiplicative product $f(\K_{\alpha,y}\ast\varphi)=u (\K_{\alpha,y}\ast\varphi)\wa$ belongs to $\De_{L^1}'$ for each $\varphi\in\Es$ and $y>0$.
Since pointwise multiplication is well defined and continuous from $\Bo\times\Bo$ to $\Bo$ (and from
$\Bop\times\Bo$ to $\Bop$), it follows that $\De_{L^1}'$ is closed under multiplication by functions in $\Bo$.
It therefore suffices to show that we have $(\K_{\alpha,y}\ast\varphi)\wa\in\Bo$ for each $\varphi\in\Es$ and $y>0$.

Recall Peetre's inequality
\begin{equation}\label{Peetre}
(1+|t-x|^2)^s\le2^{|s|}(1+x^2)^{|s|}(1+t^2)^s,\quad s\in\R.
\end{equation}
We have that $\partial^\beta ( \K_{\alpha,y}\ast\varphi)= \K_{\alpha,y}\ast\partial^\beta\varphi$, and by using \eqref{Peetre}
it is straightforward to check that
\[
|\K_{\alpha,y}\ast\partial^\beta\varphi(x)|\le\frac{C_{\alpha,n}}{y^n}\bigg(1+ \frac{x^2}{y^2}
\bigg)^{-(\alpha+n+1)/2}\int\bigg(1+ \frac{t^2}{y^2}
\bigg)^{(\alpha+n+1)/2}\partial^\beta\varphi(t) dt,
\]
where the constant $C_{\alpha,n}$ depends on $\alpha$ and $n$. Next, let $M_{\alpha,n}$ denote the multiplication operator
$M_{\alpha,n}\varphi(t)=|t|^{\alpha+n+1}\varphi(t)$. By splitting the integral in the right-hand side above into the two regions $|t|<y$ and
$|t|\ge y$, it is straightforward to check that this results in the estimate
\[
|\K_{\alpha,y}\ast\partial^\beta\varphi(x)|\le\frac{C_{\alpha,n}}{y^n}\bigg( 1+\frac{x^2}{y^2}
\bigg)^{-(\alpha+n+1)/2}( \|\partial^\beta\varphi\|_{L^1}+y^{-(\alpha+n+1)}\|M_{\alpha,n}\partial^\beta\varphi\|_{L^1})
\]
for some new constant $C_{\alpha,n}$ depending on $\alpha$ and $n$. Hence $(\K_{\alpha,y}\ast\varphi)\wa\in\Bo$,
so the $\Es'$-convolution $\K_{\alpha,y}\ast f$ exists for each $y>0$ when $f\in\wa\De_{L^1}'$. Moreover, we have
\[
\langle \K_{\alpha,y}\ast f,\varphi\rangle_{\Es',\Es}= \langle f( \K_{\alpha,y}\ast\varphi),1\rangle_{\De_{L^1}',\Boc}
=\langle \wa^{-1}f ,\wa(\K_{\alpha,y}\ast\varphi) \rangle_{\De_{L^1}',\Boc}.
\]
Using \eqref{representationformula} for $\wa^{-1}f =u\in\De_{L^1}'$, it is straightforward to check that the quantity
$\langle u ,\wa(\K_{\alpha,y}\ast\varphi) \rangle_{\De_{L^1}',\Boc}$ coincides with the integral
$\int \langle u ,\wa\tau_\eta \K_{\alpha,y}\rangle_{\De_{L^1}',\Boc}\varphi(\eta)d\eta$,
see the end of the proof of Alvarez et al.~\cite[Proposition 7]{Alvarezetal1} for details.
This proves that the $\Es'$-convolution $\K_{\alpha,y}\ast f$ is given by \eqref{convolutionformula}.

Assume next that (ii) holds, and fix $f\in\Es'$. By Alvarez et al.~\cite[Proposition 9]{Alvarezetal1}, $f\in\wa\De_{L^1}'$ if and only if
$f$ can be represented as a sum $f=f_1+M_{\alpha,n}f_2$, where $f_1\in\mathscr E'$, $M_{\alpha,n}$ is the
multiplication operator introduced above, and $f_2\in\De_{L^1}'$ is not supported near the origin.
Introduce a cutoff function $\chi\in\Ci_0^\infty$ taking values in $[0,1]$ such that $\chi(x)$ is identically equal to 1 for $|x|\le 1/2$,
positive for $|x|<1$ and vanishes for $|x|\ge1$. Write $f=\chi f+(1-\chi)f$, and note that $f_1=\chi f\in\mathscr E'$.

Next, set $\psi(t)=\chi(3t)$. Then $\psi(t)>0$ for $|t|<1/3$, and $\psi(t)=0$ for $|t|\ge1/3$. In particular,
$\psi$ vanishes on $\supp{(1-\chi)}=\{t\in\R^n:|t|\ge 1/2\}$. Consider the convolution
\[
\K_{\alpha,y}\ast\psi(x)=C_{\alpha,n}\int_{|t|<1/3} \frac{y^{\alpha+1}}{(|x-t|^2+y^2)^{(\alpha+n+1)/2}}
\psi(t)dt.
\]
For $|t|<1/3$ and $x\in\supp{(1-\chi)}$ we have $|x-t|\le 2|x|$, which implies that
\[
\K_{\alpha,y}\ast\psi(x)\ge C_{\alpha,n}\frac{y^{\alpha+1}}{(x^2+y^2)^{(\alpha+n+1)/2}}
\|\psi\|_{L^1},\quad x\in\supp{(1-\chi)}.
\]
It follows that $(1-\chi)\wa^{-1}(\K_{\alpha,y}\ast\psi)^{-1}\in\Bo$ for each $y>0$. Moreover,
assumption (ii) implies that we have $(\K_{\alpha,y}\ast\psi)f\in\De_{L^1}'$ by virtue of Definition \ref{def:convolution},
which gives
\begin{align*}
(1-\chi(x))f(x)&=|x|^{\alpha+n+1}
\frac{\wa(x)}{|x|^{\alpha+n+1}}\cdot
\frac{1-\chi(x)}{\wa(x)\K_{\alpha,y}\ast\psi(x)}\K_{\alpha,y}\ast\psi(x)f(x)\\
&=M_{\alpha,n}(x)f_2(x),
\end{align*}
where $f_2\in\De_{L^1}'$ since the map
$$
x\mapsto \frac{\wa(x)}{|x|^{\alpha+n+1}}=(1+|x|^{-2})^{(\alpha+n+1)/2},\quad x\in\supp{(1-\chi)},
$$
also belongs to $\Bo$,
and $\De_{L^1}'$ is closed under multiplication by functions in $\Bo$.
Since $f_2\equiv 0$ near the origin, this completes the proof.
\end{proof}

\begin{rmk}\label{rmk:smoothness}
We remark that the calculations in the proof of Theorem \ref{thm:convolutionwelldefined} show that if $f\in\wa\De_{L^1}'$
then the function $F:\Hu\to\C$ given by
\[
F(x,y)= \K_{\alpha,y}\ast f(x)= \langle \wa^{-1} f,\wa\tau_x \K_{\alpha,y}\rangle_{\De_{L^1}',\Boc},\quad(x,y)\in\Hu,
\]
is in $\Ci^\infty(\Hu)$ and satisfies $D_\alpha  F=0$ in $\Hu$. The derivatives
$\partial_x^\beta\partial_y^k F(x,y)$ are given by
\[
\partial_x^\beta\partial_y^k F(x,y)= \langle \wa^{-1}f,\wa\partial_x^\beta\partial_y^k \tau_x \K_{\alpha,y}\rangle_{\De_{L^1}',\Boc}.
\]
\end{rmk}

We end this section with the following proposition.

\begin{prop}\label{prop:diffofconvolution}
Let $\alpha>-1$. Let $f\in\wa\De_{L^1}'$. Then for any multi-index $\beta$ we have
$\partial^\beta(\K_{\alpha,y}\ast f)= \K_{\alpha,y}\ast (\partial^\beta f)$ in $\Es'$
for each $y>0$.
\end{prop}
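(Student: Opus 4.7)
The plan is to test both sides against an arbitrary $\varphi\in\Es$ and reduce matters to a distributional integration by parts identity in the $\De_{L^1}'$, $\Boc$ pairing. Before applying Definition \ref{def:convolution}, I first verify that $\partial^\beta f\in\wa\De_{L^1}'$, which together with Theorem \ref{thm:convolutionwelldefined} guarantees that $\K_{\alpha,y}\ast\partial^\beta f$ is well defined. Writing $u=\wa^{-1}f\in\De_{L^1}'$ and expanding by Leibniz,
\[
\wa^{-1}\partial^\beta f=\sum_{\gamma\le\beta}\binom{\beta}{\gamma}(\wa^{-1}\partial^\gamma\wa)\,\partial^{\beta-\gamma}u,
\]
and since $\wa$ is an order function each factor $\wa^{-1}\partial^\gamma\wa$ belongs to $\Bo$; closedness of $\De_{L^1}'$ under differentiation and under multiplication by elements of $\Bo$ then yields the claim.

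For fixed $\varphi\in\Es$, set $g=\K_{\alpha,y}\ast\varphi$. This $g$ is smooth and satisfies $\wa g\in\Bo$ by the estimates in the proof of Theorem \ref{thm:convolutionwelldefined}, and $\K_{\alpha,y}\ast\partial^\beta\varphi=\partial^\beta g$ by the classical smoothing formula for convolution with a smooth integrable kernel. Since $\K_{\alpha,y}$ is even, so that $\check{\K}_{\alpha,y}=\K_{\alpha,y}$, Definition \ref{def:convolution} gives
\begin{align*}
\langle\partial^\beta(\K_{\alpha,y}\ast f),\varphi\rangle_{\Es',\Es}&=(-1)^{|\beta|}\langle f\cdot\partial^\beta g,1\rangle_{\De_{L^1}',\Boc},\\
\langle\K_{\alpha,y}\ast\partial^\beta f,\varphi\rangle_{\Es',\Es}&=\langle\partial^\beta f\cdot g,1\rangle_{\De_{L^1}',\Boc}.
\end{align*}
It therefore suffices to establish the integration by parts identity
\[
\langle\partial^\beta f\cdot g,1\rangle_{\De_{L^1}',\Boc}=(-1)^{|\beta|}\langle f\cdot\partial^\beta g,1\rangle_{\De_{L^1}',\Boc}.
\]

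I would prove the latter by induction on $|\beta|$. The base case $|\beta|=1$ follows from the observation that whenever $h\in\De_{L^1}'$ one has $\langle\partial_j h,1\rangle_{\De_{L^1}',\Boc}=-\langle h,\partial_j 1\rangle_{\De_{L^1}',\Boc}=0$; applying this to $h=fg$, which lies in $\De_{L^1}'$ because $fg=(\wa^{-1}f)(\wa g)$ with $\wa^{-1}f\in\De_{L^1}'$ and $\wa g\in\Bo$, together with the Leibniz rule $\partial_j(fg)=(\partial_j f)g+f\partial_j g$, yields the single-derivative identity. The inductive step iterates this one derivative at a time. The main obstacle I anticipate is bookkeeping: every intermediate product $\partial^{\gamma_1}f\cdot\partial^{\gamma_2}g$ arising during the iteration must lie in $\De_{L^1}'$ for the single-derivative step to be rigorously applicable. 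This ultimately follows from the opening step of the proof (giving $\wa^{-1}\partial^{\gamma_1}f\in\De_{L^1}'$) together with the fact that $\wa\,\partial^{\gamma_2}g=\wa\,(\K_{\alpha,y}\ast\partial^{\gamma_2}\varphi)\in\Bo$ by the same estimates that appear in the proof of Theorem \ref{thm:convolutionwelldefined}, so that each product factors as a $\De_{L^1}'$-distribution times a $\Bo$-function.
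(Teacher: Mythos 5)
Your argument is correct and follows essentially the same route as the paper: both test against $\varphi\in\Es$, invoke Definition \ref{def:convolution} on each side, and reduce the claim to an integration by parts in the $\langle\,\cdot\,,1\rangle_{\De_{L^1}',\Boc}$ pairing, after noting that $\partial^\beta f\in\wa\De_{L^1}'$ and that $\wa(\K_{\alpha,y}\ast\partial^\gamma\varphi)\in\Bo$ by the estimates from Theorem \ref{thm:convolutionwelldefined}. The only difference is cosmetic: the paper performs the integration by parts as a one-line duality manipulation $\langle\partial^\beta f,\K_{\alpha,y}\ast\varphi\rangle=(-1)^{|\beta|}\langle f,\K_{\alpha,y}\ast\partial^\beta\varphi\rangle$, whereas you justify the same identity by induction via the Leibniz rule and the observation that $\langle\partial_j h,1\rangle_{\De_{L^1}',\Boc}=0$ for $h\in\De_{L^1}'$.
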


\begin{proof}
Using the properties of the $\Es'$-convolution we have
\begin{align*}
\langle\partial^\beta(\K_{\alpha,y}\ast f),\varphi\rangle_{\Es',\Es}
&=(-1)^{|\beta|}\langle \K_{\alpha,y}\ast f,\partial^\beta\varphi\rangle_{\Es',\Es}\\
&=(-1)^{|\beta|}\langle f( \K_{\alpha,y}\ast \partial^\beta\varphi),1\rangle_{\De_{L^1}',\Boc}\\
&=(-1)^{|\beta|}\langle f ,\K_{\alpha,y}\ast \partial^\beta\varphi\rangle_{\De_{L^1}',\Boc},
\end{align*}
where the last expression makes sense, since $f$ can be written $f=\wa u$ for some $u\in\De_{L^1}'$,
and $( \K_{\alpha,y}\ast \partial^\beta\varphi )\wa$ belongs to $\Bo$ by the first part of the proof of
Theorem \ref{thm:convolutionwelldefined}. Since $\partial^\beta f\in\wa\De_{L^1}'$, we similarly have
\begin{align*}
\langle \K_{\alpha,y}\ast (\partial^\beta f),\varphi\rangle_{\Es',\Es}
&=\langle\partial^\beta f(\K_{\alpha,y}\ast\varphi),1\rangle_{\De_{L^1}',\Boc}\\
&=\langle\partial^\beta f, \K_{\alpha,y}\ast\varphi\rangle_{\De_{L^1}',\Boc}\\
&=(-1)^{|\beta|}\langle f, \K_{\alpha,y}\ast\partial^\beta\varphi\rangle_{\De_{L^1}',\Boc},
\end{align*}
which completes the proof.
\end{proof}

\begin{rmk}
In Alvarez et al.~\cite[Lemma 2.4]{Alvarezetal2}, a proof for the case $\alpha=0$ can be found utilizing cutoff functions and
the formula \eqref{convolutionformula}. 
\end{rmk}

\section{The Dirichlet problem}\label{sec:Dirichletproblem}

In this section we show existence of solutions to the Dirichlet problem \eqref{Dirichletproblem}
for boundary data in the weighted space $\wa\De_{L^1}'$.

\begin{dfn}\label{def:Poissonintegral}
Let $\alpha>-1$. Let $\wa$ be given by \eqref{w-alfa}, and let $f\in \wa\De_{L^1}'$.
The Poisson integral of $f$ with respect to the kernel $\K_\alpha$ is defined as the function
\[
\K_\alpha[f]:(x,y)\mapsto \K_{\alpha,y}\ast f(x),\quad (x,y)\in\Hu,
\]
where the right-hand side is the $\Es'$-convolution
of $\K_{\alpha,y}$ and $f$, and $\K_{\alpha,y}(x)=\K_{\alpha}(x,y)$
in accordance with \eqref{uyfcn}.
\end{dfn}
Henceforth, we will mostly write $\K_\alpha[f]$ only when referring to the corresponding
function on $\Hu$; the value of $\K_\alpha[f]$ at $(x,y)$ will usually still be written
as $\K_{\alpha,y}\ast f(x)$, and we will continue to write $\K_{\alpha,y}\ast f$
when discussing the map $x\mapsto \K_\alpha[f](x,y)$.
The next lemma describes the integrability properties of 
$\K_{\alpha,y}\ast f$.

\begin{lemma}\label{lem:preserves}
Let $\alpha>-1$.
If $f\in \wa\De_{L^1}'$ then
the $\Es'$-convolution $\K_{\alpha,y}\ast f$
belongs to $\wa\De_{L^1}$ for all $y>0$. In particular, the $\Es'$-convolution with
the kernel $\K_{\alpha,y}$ preserves $L^1(\wa^{-1})$ for each $y>0$.
\end{lemma}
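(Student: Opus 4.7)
The plan is to reduce the assertion to a single convolution estimate involving the weight $\wa^{-1}$. By the identification $\wa\De_{L^1}=\De_{L^1(\wa^{-1})}$ recorded in Section~\ref{section:Weighteddistributions}, showing that $\K_{\alpha,y}\ast f$ belongs to $\wa\De_{L^1}$ amounts to proving $\partial^\gamma(\K_{\alpha,y}\ast f)\in L^1(\wa^{-1})$ for every multi-index $\gamma$. Proposition~\ref{prop:diffofconvolution} together with the fact that $\wa\De_{L^1}'$ is closed under differentiation lets me transfer the derivative onto $f$, so it suffices to prove the single statement: whenever $g\in\wa\De_{L^1}'$, one has $\K_{\alpha,y}\ast g\in L^1(\wa^{-1})$.

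Starting from the representation formula \eqref{representationformula} I would write $u=\wa^{-1}g=\sum_\beta\partial^\beta u_\beta$ as a finite sum with $u_\beta\in L^1$. Substituting this into the explicit formula \eqref{convolutionformula} of Theorem~\ref{thm:convolutionwelldefined} and distributing the derivatives by the Leibniz rule, together with the estimates $|\partial^\gamma\wa|\leq C\wa$ (since $\wa$ is an order function, cf.~\eqref{symbol}) and $|\partial^\delta\K_{\alpha,y}(x)|\leq C_{y,\delta}\wa^{-1}(x)$ (which follows by directly differentiating $(x^2+y^2)^{-(\alpha+n+1)/2}$), yields the pointwise bound
\[
|\K_{\alpha,y}\ast g(\eta)|\leq C_y\sum_\beta\bigl(\wa^{-1}\ast(\wa|u_\beta|)\bigr)(\eta).
\]

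The crux is then the sub-convolution inequality $\wa^{-1}\ast\wa^{-1}\leq C\wa^{-1}$. I would establish this by splitting the defining integral at $|t|=|\eta|/2$: on $|t|\leq|\eta|/2$ the inequality $|\eta-t|\geq|\eta|/2$ forces $\wa^{-1}(\eta-t)\leq C\wa^{-1}(\eta)$, leaving an integral of $\wa^{-1}$ that is finite precisely because $\alpha>-1$, as $\wa^{-1}$ is a scalar multiple of $\K_{\alpha,1}$ and hence in $L^1$ by Lemma~\ref{L1means}; the region $|t|>|\eta|/2$ is symmetric. Fubini then converts the pointwise bound above into
\[
\int\wa^{-1}(\eta)\bigl(\wa^{-1}\ast\wa|u_\beta|\bigr)(\eta)\,d\eta=\int\wa(x)|u_\beta(x)|\bigl(\wa^{-1}\ast\wa^{-1}\bigr)(x)\,dx\leq C\|u_\beta\|_{L^1},
\]
which is finite and proves the first assertion. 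The ``in particular'' claim then follows at once from the continuous inclusions $L^1(\wa^{-1})\subset\wa\De_{L^1}'$ and $\wa\De_{L^1}\subset L^1(\wa^{-1})$ recorded in \eqref{L1inclusions}. The main obstacle is the sub-convolution estimate, but the splitting argument handles it in a standard way once one observes that the integrability threshold for $\wa^{-1}$ is exactly $\alpha>-1$.
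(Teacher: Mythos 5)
Your proof is correct and follows essentially the same route as the paper's: reduce via Proposition~\ref{prop:diffofconvolution} and the smoothness remark to showing $\K_{\alpha,y}\ast g\in L^1(\wa^{-1})$ for $g\in\wa\De_{L^1}'$, then use the representation formula, the explicit pairing \eqref{convolutionformula}, Leibniz, and Tonelli, finishing with a weighted convolution inequality. The only (harmless) deviations are that you discard the extra decay coming from derivatives of the kernel, absorbing it into $y$-dependent constants, and that you prove the resulting sub-convolution bound $\wa^{-1}\ast\wa^{-1}\le C\wa^{-1}$ directly by the splitting argument, whereas the paper keeps the sharper exponents and quotes Lemma~2.8 of Alvarez et al.\ for the corresponding integral estimate.
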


\begin{proof}
As in the proof of Alvarez et al.~\cite[Lemma 3.1]{Alvarezetal2} we claim that it suffices to prove the implication
\begin{equation}\label{implication}
u\in\wa\De_{L^1}'\quad\Longrightarrow\quad \K_{\alpha,y}\ast u\in L^1(\wa^{-1}).
\end{equation}
Indeed, given $f\in\wa\De_{L^1}'$ we have that $\partial^\beta(\K_{\alpha,y}\ast f)= \K_{\alpha,y}\ast (\partial^\beta f)$
for any multi-index $\beta$ by Proposition \ref{prop:diffofconvolution}.
Since $\wa\De_{L^1}'$ is closed under differentiation we have $\partial^\beta f\in\wa\De_{L^1}'$,
so $\K_{\alpha,y}\ast (\partial^\beta f)$ is smooth in view of the remark on page \pageref{rmk:smoothness}
following Theorem \ref{thm:convolutionwelldefined}. Hence, if the implication \eqref{implication} holds
then
\[
\partial^\beta(\K_{\alpha,y}\ast f)\in\De_{L^1(\wa^{-1})}=\wa\De_{L^1}
\]
for any multi-index $\beta$, so $\K_{\alpha,y}\ast f\in\wa\De_{L^1}$.

Thus, let  $f\in\wa\De_{L^1}'$. By Definition \ref{def:weightedspace} we can write $f=\sum_\beta \wa \partial^\beta f_\beta$
where $f_\beta\in L^1$ and the sum is finite. We may therefore without loss of generality assume that $f=\wa\partial^\beta f_\beta$
with $f_\beta\in L^1$.
According to \eqref{convolutionformula} we then have
\begin{align}
\|\K_{\alpha,y}\ast f\|_{L^1(\wa^{-1})}
&=\int \wa^{-1}(\eta)\lvert\langle \partial^\beta f_\beta,\wa\tau_\eta \K_{\alpha,y}\rangle_{\De_{L^1}',\Boc}\rvert d\eta\\
&\le\int \wa^{-1}(\eta)\int|f_\beta(x)||\partial_x^\beta(\wa(x)\K_{\alpha,y}(\eta-x) ) |dx d\eta.
\label{firstL1normestimate}
\end{align}
By Leibniz' formula
\[
\partial_x^\beta(\wa(x)\K_{\alpha,y}(\eta-x) )=\sum_{\gamma\le\beta}\binom{\beta}{\gamma}\partial^\gamma \wa(x)
\partial^{\beta-\gamma}\K_{\alpha,y}(\eta-x).
\]
By using property \eqref{symbol} and the fact that $1+a\sim(1+a^2)^{\frac{1}{2}}$ whenever $a\ge0$, that is,
$(1+a^2)^{\frac{1}{2}}\le 1+a\le\sqrt{2}(1+a^2)^{\frac{1}{2}}$ if $a\ge0$, it is straightforward to check that
\[
|\partial_x^\beta(\wa(x)\K_{\alpha,y}(\eta-x) )|\le\sum_{\gamma\le\beta}\frac{C_{\alpha,\beta,\gamma,n}}{y^{n+|\beta-\gamma|}}
\cdot\frac{(1+|x|)^{\alpha+n+1-|\gamma|}}{\Big(1+\frac{|\eta-x|}{y} \Big)^{\alpha+n+1+|\beta-\gamma|}}.
\]
Next, note that
\[
\bigg(1+\frac{|\eta-x|}{y} \bigg)^{-\alpha-n-1-|\beta-\gamma|}\le
(1+|\eta-x|)^{-\alpha-n-1-|\beta-\gamma|}
\max(1,y^{\alpha+n+1+|\beta-\gamma|}).
\]
Hence, \eqref{firstL1normestimate} and an application of Tonelli's theorem gives
\[
\|\K_{\alpha,y}\ast f\|_{L^1(\wa^{-1})}
\le \sum_{\gamma\le\beta}C_{\alpha,\beta,\gamma,n,y}
\int |f_\beta(x)|(1+|x|)^{\alpha+n+1-|\gamma|} I(x) dx,
\]
where
\[
I(x)=\int \wa^{-1}(\eta)(1+|\eta-x|)^{-\alpha-n-1-|\beta-\gamma|}
d\eta .
\]
By Alvarez et al.~\cite[Lemma 2.8]{Alvarezetal2} we have $0<I(x)\le C_{\alpha,\beta,\gamma,n}(1+|x|)^{-\alpha-n-1}$.
Since $f_\beta\in L^1$ we conclude that
\[
\|\K_{\alpha,y}\ast f\|_{L^1(\wa^{-1})}
\le \sum_{\gamma\le\beta}C_{\alpha,\beta,\gamma,n,y}
\int |f_\beta(x)|(1+|x|)^{-|\gamma|} dx<\infty.
\]
Having proved the first statement of the lemma, the last statement follows immediately by virtue of \eqref{L1inclusions}.
\end{proof}

By virtue of \eqref{L1inclusions}, Lemma \ref{lem:preserves}
ensures that the $\Es'$-convolution $\K_{\alpha,y}\ast f$ belongs to $\wa\De_{L^1}'$
for all $y>0$ whenever $f\in\wa\De_{L^1}'$. We can therefore consider the convergence of
$\K_{\alpha,y}\ast f$ in $\wa\De_{L^1}'$ as $y\to0$.

\begin{thm}\label{thm:existence}
Let $\alpha>-1$. Let $f\in \wa\De_{L^1}'$ and set $u=\K_\alpha[f]$. Then $u_y\to f$ in $\wa\De_{L^1}'$
as $y\to0$, where $u_y(x)=u(x,y)$ for $y>0$ in accordance with \eqref{uyfcn}.
\end{thm}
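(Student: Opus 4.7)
The plan is to unwind Definition \ref{def:weightedspace}: the asserted convergence $u_y\to f$ in $\wa\De_{L^1}'$ is the same as $\wa^{-1}(u_y-f)\to 0$ in $\De_{L^1}'$, which via $\De_{L^1}'=(\Boc)'$ reduces to proving $\langle u_y-f,\varphi\rangle\to 0$ for every $\varphi=\wa^{-1}\psi$ with $\psi\in\Boc$. I would then rewrite this pairing using the evenness of $\K_{\alpha,y}$ in the spatial variable, thereby reducing the problem to a statement purely about test functions.

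Concretely, Lemma \ref{lem:preserves} gives $u_y\in\wa\De_{L^1}\subset L^1(\wa^{-1})$, and together with $|\varphi|\le C\wa^{-1}$ this forces $\langle u_y,\varphi\rangle=\int u_y(\eta)\varphi(\eta)\,d\eta$. Substituting the formula \eqref{convolutionformula} for $u_y(\eta)$, invoking \eqref{representationformula} to write $\wa^{-1}f=\sum_\beta\partial^\beta f_\beta$ with $f_\beta\in L^1$, and exchanging the order of integration (justified by the estimates at the end of the proof of Theorem \ref{thm:convolutionwelldefined}) yields
\[
\langle u_y,\varphi\rangle=\langle\wa^{-1}f,\wa(\K_{\alpha,y}\ast\varphi)\rangle_{\De_{L^1}',\Boc}.
\]
Since also $\langle f,\varphi\rangle=\langle\wa^{-1}f,\psi\rangle_{\De_{L^1}',\Boc}$ by the definition of the pairing, everything reduces to proving $\wa(\K_{\alpha,y}\ast\varphi)\to\psi$ in $\Boc$ as $y\to 0$; continuity of $\wa^{-1}f\in\De_{L^1}'$ on $\Boc$ then finishes the argument.

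For the $\Boc$-convergence the decisive point is that $\alpha>-1$ implies $\wa^{-1}\in L^1$, so $\varphi=\wa^{-1}\psi$ belongs to $L^1\cap\Bo$ and is uniformly continuous together with all its derivatives. A standard approximate-identity argument using $\|\K_{\alpha,y}\|_{L^1}=1$ from Lemma \ref{L1means} (split into $|z|<\delta$ and $|z|\ge\delta$, exploit uniform continuity on the former region and the fact that $\int_{|z|\ge\delta}\K_{\alpha,y}(z)\,dz\to 0$ as $y\to 0$ on the latter), combined with Proposition \ref{prop:diffofconvolution}, shows that $\partial^\beta(\K_{\alpha,y}\ast\varphi)\to\partial^\beta\varphi$ uniformly on $\R^n$ for every multi-index $\beta$. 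Multiplying by $\partial^\gamma\wa$ and summing via Leibniz then yields $\partial^\beta[\wa(\K_{\alpha,y}\ast\varphi)]\to\partial^\beta\psi$ uniformly on compact subsets of $\R^n$.

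The \textbf{main obstacle} is to upgrade this to convergence in $\Boc$, for which the family $\{\wa(\K_{\alpha,y}\ast\varphi):0<y\le 1\}$ must additionally be bounded in $\Bo$. To establish this I would split $\K_{\alpha,y}\ast\varphi(x)=\int\K_{\alpha,y}(x-t)\varphi(t)\,dt$ into the regions $|t|<|x|/2$ and $|t|\ge|x|/2$. In the first, $|x-t|\ge|x|/2$ together with $y^{\alpha+1}\le 1$ for $y\le 1$ gives $\K_{\alpha,y}(x-t)\le C\wa^{-1}(x)$, so the contribution is at most $C\|\varphi\|_{L^1}\wa^{-1}(x)$; in the second $|\varphi(t)|\le C\wa^{-1}(t)\le C'\wa^{-1}(x)$, so the contribution is at most $C'\wa^{-1}(x)$ after using $\int\K_{\alpha,y}(z)\,dz=1$. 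Applying the same splitting to $\K_{\alpha,y}\ast\partial^\beta\varphi$ controls every derivative of $\wa(\K_{\alpha,y}\ast\varphi)$ uniformly in $y\in(0,1]$, and combined with the preceding paragraph this yields the required convergence in $\Boc$.
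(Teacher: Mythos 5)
Your argument is sound in outline but takes a genuinely different route from the paper. The paper first uses the representation formula \eqref{representationformula} together with Proposition \ref{prop:diffofconvolution} and the continuity of differentiation on $\wa\De_{L^1}'$ to reduce to the case $f\in L^1(\wa^{-1})$, and then proves \emph{norm} convergence $\|\K_{\alpha,y}\ast f-f\|_{L^1(\wa^{-1})}\to0$ by approximating $f$ with a compactly supported continuous $g$ and establishing the uniform kernel bound $\K_{\alpha,y}\ast\wa^{-1}\le C_{\alpha,n}\wa^{-1}$ for small $y$. You instead dualize against $\psi\in\Boc$ and reduce everything to the single statement $\wa(\K_{\alpha,y}\ast\wa^{-1}\psi)\to\psi$ in $\Boc$; this treats general $f\in\wa\De_{L^1}'$ in one stroke and avoids the density argument. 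Notably, the decisive estimate is the same in both proofs: your splitting into $|t|<|x|/2$ and $|t|\ge|x|/2$ yields $|\K_{\alpha,y}\ast\varphi|\le C\wa^{-1}$ uniformly for $0<y\le1$ whenever $|\varphi|\le C\wa^{-1}$, which is exactly the bound $\K_{\alpha,y}\ast\wa^{-1}\le C\wa^{-1}$ that the paper obtains by its own splitting (using the monotonicity of $y\mapsto\K_{\alpha,y}(x)$ for $y<|x|/\varrho$).

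The one point you must address explicitly is the mode of convergence. What you prove is $\langle\wa^{-1}(u_y-f),\psi\rangle\to0$ for each fixed $\psi\in\Boc$, i.e.\ weak-$*$ convergence in $\De_{L^1}'=(\Boc)'$. The paper's proof delivers more: $L^1(\wa^{-1})$-norm convergence of the building blocks $\K_{\alpha,y}\ast f_\beta$, hence convergence in whichever dual topology (weak or strong) one equips $\De_{L^1}'$ with. If the strong dual topology is intended, your argument needs a supplementary remark (for instance that bounded subsets of $\Boc$ are relatively compact, so weak-$*$ convergence of an equicontinuous family upgrades to strong convergence), or you should state explicitly that weak-$*$ convergence is what you establish --- which is all that Corollary \ref{cor:existence} uses. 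Apart from this, the remaining steps you only sketch (the Fubini interchange behind $\langle u_y,\varphi\rangle=\langle\wa^{-1}f,\wa(\K_{\alpha,y}\ast\varphi)\rangle$ for $\varphi=\wa^{-1}\psi$, and the passage from locally uniform convergence of all derivatives plus $\Bo$-boundedness to convergence in $\Boc$) are correctly identified and can be justified with the estimates you provide.
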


\begin{proof}
We will essentially adapt the proof of Alvarez et al.~\cite[Theorem 3.6]{Alvarezetal2}.
Suppose first that we have already proved that if $g\in L^1(\wa^{-1})$ then $\K_{\alpha,y}\ast g\to g$ in $L^1(\wa^{-1})$ as $y\to0$.
Since $L^1(\wa^{-1})$ is continuously embedded in $\wa\De_{L^1}'$, it follows that
$\K_{\alpha,y}\ast g\to g$ in $\wa\De_{L^1}'$ as $y\to0$.
Now let $f\in \wa\De_{L^1}'$. By \eqref{representationformula} we can then write $f$ as a finite sum
with terms of the form $\partial^\beta f_\beta$ with $f_\beta\in L^1(\wa^{-1})$.
Recall that the operation of differentiation is continuous in $\wa\De_{L^1}'$
according to the discussion following Definition \ref{def:weightedspace}.
By Proposition \ref{prop:diffofconvolution} we have $\partial^\beta(\K_{\alpha,y}\ast f_\beta)= \K_{\alpha,y}\ast (\partial^\beta f_\beta)$
since $f_\beta\in\wa\De_{L^1}'$. This gives
\[
\K_{\alpha,y}\ast f=\sum_\beta \K_{\alpha,y}\ast (\partial^\beta f_\beta)
=\sum_\beta\partial^\beta(\K_{\alpha,y}\ast f_\beta)\to\sum_\beta\partial^\beta f_\beta=f
\]
in $\wa\De_{L^1}'$ as $y\to0$. Hence it suffices to prove that if
$f\in L^1(\wa^{-1})$ then $\K_{\alpha,y}\ast f\to f$ in $L^1(\wa^{-1})$ as $y\to0$.

Suppose therefore that $f\in L^1(\wa^{-1})$. Since $\alpha>-1$, the definition of $\wa$ ensures that
$\wa^{-1}(x)dx$ is a finite, complete, regular measure on $\R^n$, which implies that the compactly supported
continuous functions $\mathscr \Ci_0$ are dense in $ L^1(\wa^{-1})$. Given $\ve>0$ we let $g\in\Ci_0$ satisfy
\begin{equation}\label{fminusglessthanepsilon}
\|f-g\|_{L^1(\wa^{-1})}<\ve.
\end{equation}
Next, note that since $0<\wa^{-1}(x)\le 1$ for $x\in\R^n$, it follows that
\begin{align*}
\|\K_{\alpha,y}\ast g-g\|_{L^1(\wa^{-1})} & \le\iint \K_{\alpha,y}(x)|g(t-x)-g(t)|dxdt\\
&=\int \K_{\alpha,y}(x)\|\tau_xg-g\|_{L^1} dx,
\end{align*}
where the last identity follows by Tonelli's theorem since $\K_{\alpha,y}$ is nonnegative.
It is well-known that since the unweighted $L^1$
space is translation invariant, we have that $\|\tau_x g-g\|_{L^1}\to0$ as $x\to0$, see Bochner~\cite[Theorem 1.2.1]{Bochner}.
Since $\K_{\alpha,y}$ enjoys the usual properties of kernel functions, that is,  $\int \K_{\alpha,y}(x)dx=1$ for all $y>0$,
and for each $r>0$ we have
\[
\lim_{y\to0}\int_{|x|\ge r} \K_{\alpha,y}(x)dx=\lim_{y\to0} \int_{y|x|\ge r} \K_{\alpha,1}(x)dx=0,
\]
it follows that 
\begin{equation}\label{forcontinousfunctions}
\|\K_{\alpha,y}\ast g-g\|_{L^1(\wa^{-1})}\to0\quad\text{as }y\to0,
\end{equation}
see Bochner~\cite[Theorem 1.3.2]{Bochner}.
Now,
\begin{align}
\|\K_{\alpha,y}\ast f-f\|_{L^1(\wa^{-1})} & \le \|\K_{\alpha,y}\ast (f-g)\|_{L^1(\wa^{-1})}\\
& \quad +\|\K_{\alpha,y}\ast g-g\|_{L^1(\wa^{-1})}+\|g-f\|_{L^1(\wa^{-1})},
\label{convergencebound1}
\end{align}
so it only remains to estimate the first term in the right-hand side. Note that
\begin{equation}\label{convergencebound2}
\|\K_{\alpha,y}\ast (f-g)\|_{L^1(\wa^{-1})}\le\int|f(t)-g(t)|\K_{\alpha,y}\ast\wa^{-1}(t)dt
\end{equation}
by Tonelli's theorem and the fact that both $\K_{\alpha,y}$ and $\wa^{-1}$ are radial functions.

We now estimate $\K_{\alpha,y}\ast\wa^{-1}(t)$.
It is straightforward to check that the function $y\mapsto \K_{\alpha,y}(x)$ is increasing for 
$0<y<|x|/\varrho$ where $\varrho=\varrho_{\alpha,n}=\sqrt{n}/\sqrt{\alpha+1}$.
Hence, if $y<1$ and $|x|\ge 2\varrho$, then $\K_{\alpha,y}(x)\le \K_{\alpha,y+1}(x)$. This gives
\begin{align*}
\K_{\alpha,y}\ast\wa^{-1}(t) & \le \int_{|x|\ge2\varrho} \K_{\alpha,y+1}(x)\wa^{-1}(t-x)dx\\
&\quad +\int_{|x|<2\varrho} \K_{\alpha,y}(x)\wa^{-1}(t-x)dx=I_1(t)+I_2(t).
\end{align*}
Below we use the fact that $1+a\sim(1+a^2)^{\frac{1}{2}}$ whenever $a\ge0$, and we let $C_{\alpha,n}$
denote a constant, depending on $\alpha$ and $n$, the value of which is permitted to change between occurrences. We first estimate $I_1(t)$.
For $0<y<1$ we have $ \K_{\alpha,y+1}(x)\le 2^{\alpha+1}\K_{\alpha,1}(x)$, which gives
\[
I_1(t)\le C_{\alpha,n}\int_{|x|\ge2\varrho}(1+|x|)^{-(\alpha+n+1)/2}
(1+|t-x|)^{-(\alpha+n+1)/2}dx.
\]
Since $\alpha>-1$, it follows by an application of Alvarez et al.~\cite[Lemma 2.8]{Alvarezetal2} that the right-hand side is finite and bounded by a constant $C_{\alpha,n}$
multiplied by $\wa^{-1}(t)$. To estimate $I_2(t)$ we apply Peetre's inequality \eqref{Peetre}
to $\wa^{-1}(t-x)$ which gives
\begin{align*}
I_2(t)&\le C_{\alpha,n}\wa^{-1}(t)\int_{|x|<2\varrho} \K_{\alpha,y}(x)\wa(x)dx\\
&\le C_{\alpha,n}\wa(2\varrho)\wa^{-1}(t)\int \K_{\alpha,y}(x)dx=C_{\alpha,n}\wa^{-1}(t).
\end{align*}
Combining the estimates for $I_1(t)$ and $I_2(t)$ we thus have $0<\K_{\alpha,y}\ast\wa^{-1}(t)\le C_{\alpha,n}\wa^{-1}(t)$.
By virtue of \eqref{convergencebound2} we find that $\|\K_{\alpha,y}\ast (f-g)\|_{L^1(\wa^{-1})}\le C_{\alpha,n}\|f-g\|_{L^1(\wa^{-1})}$.
In view of \eqref{fminusglessthanepsilon}--\eqref{convergencebound1} this implies that
$\|\K_{\alpha,y}\ast f-f\|_{L^1(\wa^{-1})}\le (C_{\alpha,n}+2)\ve$
for any sufficiently small $y>0$, which completes the proof.
\end{proof}

\begin{cor}\label{cor:existence}
Let $\alpha>-1$. Let $f\in \wa\De_{L^1}'$. Then $\K_\alpha[f]$
is a solution to the Dirichlet problem \eqref{Dirichletproblem}.
\end{cor}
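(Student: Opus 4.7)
The plan is to verify the two requirements of the Dirichlet problem \eqref{Dirichletproblem} separately, since both have essentially been prepared by earlier results.

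For the equation $D_\alpha u = 0$ in $\Hu$, no real work is needed: the remark on page \pageref{rmk:smoothness} following Theorem \ref{thm:convolutionwelldefined} already records that, for any $f \in \wa\De_{L^1}'$, the function $F(x,y) = \K_{\alpha,y}\ast f(x) = \langle \wa^{-1}f, \wa\tau_x\K_{\alpha,y}\rangle_{\De_{L^1}',\Boc}$ lies in $\Ci^\infty(\Hu)$ and satisfies $D_\alpha F = 0$ there. The rationale behind that remark is simply that one differentiates under the distributional pairing, with the $x$- and $y$-derivatives of $\wa \tau_x \K_{\alpha,y}$ still belonging to $\Boc$ by the same type of Peetre-based estimates as in the first half of the proof of Theorem \ref{thm:convolutionwelldefined}, after which one applies the identity $D_\alpha \K_\alpha = 0$ from Theorem \ref{Poissonkernel} pointwise in the argument.

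For the boundary condition $\lim_{y\to 0} u_y = f$ in $\Es'$, the work is done by Theorem \ref{thm:existence}, which gives the stronger statement that $u_y \to f$ in the weighted space $\wa\De_{L^1}'$ as $y \to 0$. Since $\wa\De_{L^1}'$ is continuously embedded in $\Es'$ (as noted in Section \ref{section:Weighteddistributions} right after the representation formula \eqref{representationformula}), this convergence descends to convergence in $\Es'$, which is the precise meaning of the boundary condition in \eqref{Dirichletproblem}.

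Thus the corollary follows by assembling three earlier pieces: Theorem \ref{Poissonkernel} (through the remark after Theorem \ref{thm:convolutionwelldefined}) gives the PDE, Theorem \ref{thm:convolutionwelldefined} guarantees that $\K_\alpha[f]$ is well defined as an $\Es'$-convolution, and Theorem \ref{thm:existence} supplies the boundary trace. There is no real obstacle here; the only point requiring mild care is the verification that the pointwise differentiation $D_\alpha F = 0$ is legitimate under the distributional bracket, but this is precisely what the remark after Theorem \ref{thm:convolutionwelldefined} records and what the estimates in that proof establish.
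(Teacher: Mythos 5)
Your proposal is correct and follows essentially the same route as the paper: the PDE is settled by the remark following Theorem \ref{thm:convolutionwelldefined}, and the boundary condition follows from Theorem \ref{thm:existence} together with the continuous embedding $\wa\De_{L^1}'\subset\Es'$. The extra justification you give for differentiating under the distributional pairing is consistent with what the paper records in that remark.
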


\begin{proof}
Set $u=\K_\alpha[f]$ and note that $\wa\De_{L^1}'\subset\Es'$ with continuous inclusion.
By Theorem \ref{thm:existence} we thus have $u_y\to f$ in $\Es'$ as $y\to0$,
where $u_y(x)=u(x,y)$ for $y>0$ in accordance with \eqref{uyfcn}.
In view of the remark on page \pageref{rmk:smoothness},
we have $D_\alpha u=0$ in $\Hu$, which completes the proof.
\end{proof}

Compared to the harmonic case $\alpha=0$,
the proof of Theorem \ref{thm:existence} is complicated by the fact that when $\alpha\ne0$,
the family $\{\K_{\alpha,y}\}_{y>0}$ does not in general satisfy the semi-group
property $\K_{0,y_1+y_2}=\K_{0,y_1}\ast \K_{0,y_2}$ enjoyed by the classical Poisson kernel
$\K_{0}$. (This would have allowed for an easier estimation of
$\K_{\alpha,y}\ast\wa^{-1}(t)$.) In fact, in the context of hyperbolic Brownian motion it is a point of interest
to determine, for each fixed $\eta>0$, the function $\mathscr G_\alpha\equiv
\mathscr G_\alpha(\eta)$ such that $\mathscr G_\alpha:(x,y)\mapsto \mathscr G_\alpha(x,y)$ satisfies
\begin{equation}\label{eq:hittingrandomvariable}
\K_{\alpha,y}=\mathscr G_{\alpha,y}\ast \K_{\alpha,\eta},\quad y>\eta,
\end{equation}
where $\mathscr G_{\alpha,y}(x)=\mathscr G_{\alpha}(x,y)$ in accordance with \eqref{uyfcn}.
$\mathscr G_\alpha$ is then the kernel function for the Dirichlet problem $D_\alpha u=0$
in the half space $y>\eta$ with boundary conditions given on the hyperplane $y=\eta$.
In other words, it is the probability density function of the measure of probability
that the process \eqref{HyperbolicBrownianMotion} with $\mu=\frac{\alpha}{2}+\frac{1}{2}$ and
starting at $(0,y)$, $y>\eta$, hits a portion of the boundary $y=\eta$.
(For this probability measure, the terminology {\it hitting distribution}
is commonly used.) We would thus like to solve \eqref{eq:hittingrandomvariable},
a priori interpreted in the distributional sense by means of the $\Es'$-convolution,
and find a solution in $\wa\De'_{L^1}$ to justify the equation. Since the
$\Es'$-convolution satisfies the Fourier exchange formula, a necessary condition is that
$\widehat{\K_{\alpha,y}}=\widehat{\mathscr G_{\alpha,y}}\widehat{\K_{\alpha,\eta}}$.
Since the Fourier transform of $\K_{\alpha,y}$ is nonvanishing for each $y>0$, this is equivalent to
\begin{equation}\label{eq:characteristicfunction}
\widehat{\mathscr G_{\alpha,y}}(\xi)=
\frac{\widehat{\K_{\alpha,y}}(\xi)}{\widehat{\K_{\alpha,\eta}}(\xi)}=\left(\frac{y}{\eta}\right)^{(\alpha+1)/2}
\frac{K_{(\alpha+1)/2}(y\lvert\xi\rvert)}{K_{(\alpha+1)/2}(\eta\lvert\xi\rvert)},\quad y>\eta,
\end{equation}
where the second formula follows from Theorem \ref{thm:Ftransform}. As before, $K_{(\alpha+1)/2}$ denotes the
modified Bessel function of the third kind of order $\frac{\alpha}{2}+\frac{1}{2}$.
For $n=1$, this formula appears for example in Baldi et al.~\cite[Section 4]{BaldiEtAl2}
(choosing $y=1$ and $\nu=\frac{\alpha}{2}+\frac{1}{2}$).
An equivalent formula also appears in
Byczkowski et al.~\cite[Theorem 2.1]{ByczkowskiEtAl} in the special case $\alpha=n-1$.
In both instances the proofs involve probabilistic methods.

Using Corollary \ref{cor:Ftransform} together with the first identity in \eqref{eq:characteristicfunction} we have
\begin{equation}
\widehat{\mathscr G_{\alpha,y}}(\xi)=\left(\frac{y}{\eta}\right)^{\alpha+1}\frac{\int_1^\infty 
e^{-y\lvert\xi\rvert t}
(t^2-1)^{\alpha/2} dt}{\int_1^\infty 
e^{-\eta\lvert\xi\rvert t} (t^2-1)^{\alpha/2} dt}\le\left(\frac{y}{\eta}\right)^{\alpha+1}e^{-(y-\eta)\lvert\xi\rvert},\quad y>\eta,
\end{equation}
which shows that $\widehat{\mathscr G_{\alpha,y}}$ is rapidly decreasing and belongs to $L^2$.
By means of the Fourier inversion formula, this gives an element
$\mathscr G_{\alpha,y}\in\Ci^\infty$, uniquely defined in $L^2$.
Since $\K_{\alpha,\eta}\in L^1$ by Lemma \ref{L1means}, the convolution
$\mathscr G_{\alpha,y}\ast\K_{\alpha,\eta}$ is well defined in the usual sense; moreover,
it belongs to $L^2$ by Young's inequality and the Fourier exchange formula holds, so
\begin{equation}\label{eq:exchangeformulaconvolution}
\widehat{\mathscr G_{\alpha,y}\ast\K_{\alpha,\eta}}=\widehat{\mathscr G_{\alpha,y}}\widehat{\K_{\alpha,\eta}}=\widehat{\K_{\alpha,y}}.
\end{equation}
It follows that $\mathscr G_\alpha$
is a solution to \eqref{eq:hittingrandomvariable}. Indeed, it is straightforward to check
that the translation $\tau_{h}\K_{\alpha,\eta}$ tends to $\K_{\alpha,\eta}$ in $L^2$ as $h\to0$,
and that $\mathscr G_{\alpha,y}\ast\K_{\alpha,\eta}$ therefore is continuous.
By Parseval's formula and \eqref{eq:exchangeformulaconvolution} we have
\begin{equation}
\int \K_{\alpha,y}(x)\varphi(x)dx=
\int\widehat{\K_{\alpha,y}}(\xi)\hat\varphi(-\xi)d\xi
=\int\mathscr G_{\alpha,y}\ast\K_{\alpha,\eta}(x)\varphi(x)dx
\end{equation}
for all $\varphi\in\Ci_0^\infty$, which implies that $\K_{\alpha,y}=\mathscr G_{\alpha,y}\ast\K_{\alpha,\eta}$,
see H{\"o}rmander~\cite[Theorem 1.2.4]{Hormander1}. 

Note that the Fourier transform of $\mathscr G_{\alpha,y}$
is radial by \eqref{eq:characteristicfunction}. In view of the discussion surrounding
\eqref{eq:steinandweiss}, the Fourier inversion formula can
therefore be used to obtain the representation formula
\begin{align}
\mathscr G_{\alpha,y}(x)
=\frac{\lvert x\rvert^{(2-n)/2}}{(2\pi)^{n/2}}\left(\frac{y}{\eta}\right)^{(\alpha+1)/2}\int_0^\infty
\frac{K_{(\alpha+1)/2}(ys)}{K_{(\alpha+1)/2}(\eta s)}s^{n/2}J_{(n-2)/2}(\lvert x\rvert s)ds.
\end{align}
This formula appears in
Byczkowski et al.~\cite[Theorem 2.2]{ByczkowskiEtAl} in the special case $\alpha=n-1$.
Writing $\mathscr G_{\alpha,y}(\eta)$ for the function appearing above,
they also show that when $\alpha=n-1$, the family $\{\mathscr G_{\alpha,y}(\eta)\}_{0<\eta<y}$
satisfies the semi-group property
\[
\mathscr G_{\alpha,y}(\eta_1)=\mathscr G_{\alpha,y}(\eta_2)\ast \mathscr G_{\alpha,\eta_2}(\eta_1),
\quad 0<\eta_1<\eta_2<y.
\]
Clearly, this continues to hold for arbitrary $\alpha>-1$; in fact,
it is an immediate consequence of the first identity in \eqref{eq:characteristicfunction}
in view of the previous discussion.
For completeness we collect these observations in the following proposition.

\begin{prop}\label{prop:hitting}
Let $\alpha>-1$ and let $\K_\alpha$ be given by Definition \ref{def:Poissonkernel}. For each $\eta>0$ there is a uniquely defined function
$\mathscr G_\alpha(\eta)$ 
satisfying
$\K_{\alpha,y}=\mathscr G_{\alpha,y}(\eta)\ast \K_{\alpha,\eta}$ for $y>\eta$. $\mathscr G_\alpha(\eta)$
can be represented by
\begin{align}
\mathscr G_{\alpha}(\eta):(x,y)\mapsto 
\frac{\lvert x\rvert^{(2-n)/2}}{(2\pi)^{n/2}}\left(\frac{y}{\eta}\right)^{(\alpha+1)/2}\int_0^\infty
\frac{K_{(\alpha+1)/2}(ys)}{K_{(\alpha+1)/2}(\eta s)}s^{n/2}J_{(n-2)/2}(\lvert x\rvert s)ds,
\end{align}
where $J_\nu$ denotes the Bessel function of the first kind of order $\nu$.
Moreover, the Fourier transform of $\mathscr G_{\alpha,y}(\eta)$ is given by \eqref{eq:characteristicfunction},
and the family $\{\mathscr G_{\alpha,y}(\eta)\}_{0<\eta<y}$
satisfies the semi-group property
\[
\mathscr G_{\alpha,y}(\eta_1)=\mathscr G_{\alpha,y}(\eta_2)\ast \mathscr G_{\alpha,\eta_2}(\eta_1),
\quad 0<\eta_1<\eta_2<y.
\]
\end{prop}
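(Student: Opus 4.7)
The plan is essentially to consolidate the argument that has already been sketched in the discussion preceding the proposition. For uniqueness, I would observe that if $\mathscr G_{\alpha,y}(\eta)$ is any tempered distribution satisfying $\K_{\alpha,y} = \mathscr G_{\alpha,y}(\eta) \ast \K_{\alpha,\eta}$ (interpreted as an $\Es'$-convolution), the Fourier exchange formula combined with the positivity and nonvanishing of $\widehat{\K_{\alpha,\eta}}$ (Theorem \ref{thm:Ftransform}) forces $\widehat{\mathscr G_{\alpha,y}(\eta)}$ to be the explicit quotient in \eqref{eq:characteristicfunction}. Injectivity of the Fourier transform on $\Es'$ then yields uniqueness.

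For existence, I would first invoke Corollary \ref{cor:Ftransform} to bound this quotient by $(y/\eta)^{\alpha+1} e^{-(y-\eta)|\xi|}$ for $y > \eta$. This shows it is a rapidly decreasing continuous function, and in particular lies in $L^2$, so the Fourier inversion formula defines $\mathscr G_{\alpha,y}(\eta) \in L^2 \cap \Ci^\infty$. Since $\K_{\alpha,\eta} \in L^1$ by Lemma \ref{L1means}, Young's inequality gives that $\mathscr G_{\alpha,y}(\eta) \ast \K_{\alpha,\eta}$ is a bona fide $L^2$ function, and it is continuous because translation is continuous in $L^2$. Parseval's formula combined with the Fourier exchange identity then shows this continuous function agrees with $\K_{\alpha,y}$ as a tempered distribution, and hence pointwise — this is the one subtle step, already navigated in the discussion preceding the proposition.

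The integral representation then comes from the Fourier inversion formula applied to the radial function $\widehat{\mathscr G_{\alpha,y}(\eta)}$, via the Stein--Weiss identity \eqref{eq:steinandweiss}. Finally, the semi-group property for $0 < \eta_1 < \eta_2 < y$ is immediate after taking Fourier transforms: the quotient in \eqref{eq:characteristicfunction} telescopes as
\[
\frac{\widehat{\K_{\alpha,y}}}{\widehat{\K_{\alpha,\eta_1}}}
= \frac{\widehat{\K_{\alpha,y}}}{\widehat{\K_{\alpha,\eta_2}}} \cdot \frac{\widehat{\K_{\alpha,\eta_2}}}{\widehat{\K_{\alpha,\eta_1}}},
\]
so both sides of the claimed convolution identity have the same Fourier transform, and uniqueness (applied with $\eta_1$ in place of $\eta$) forces equality. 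The main technical obstacle is thus the lifting from $L^2$-identity of Fourier transforms to pointwise identity of the underlying continuous functions; everything else is a matter of collecting what was already established.
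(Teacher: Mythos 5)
Your proposal is correct and follows essentially the same route as the paper, whose proof of this proposition is precisely the discussion preceding it: Fourier exchange to force the quotient \eqref{eq:characteristicfunction}, the exponential bound from Corollary \ref{cor:Ftransform} to get an $L^2\cap\Ci^\infty$ function by Fourier inversion, Young's inequality and Parseval to upgrade the $L^2$ identity to a pointwise one between continuous functions, the Stein--Weiss radial inversion formula for the integral representation, and telescoping of the quotients for the semi-group property. No substantive differences to report.
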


\section{Asymptotic behavior of the Poisson integral}\label{sec:asymptoticgrowth}

In this section we investigate asymptotic growth behavior of the Poisson
integral $\K_{\alpha}[f]$ when $f\in\wa\De'_{L^1}$.
We shall obtain growth estimates comparable to those
satisfied by the classical ($\alpha=0$) Poisson integral of $p$-summable functions proved by
Siegel and Talvila~\cite{SiegelTalvila}. We begin with a lemma, which is essentially
just a version of~\cite[Theorem 2.1]{SiegelTalvila} in the presence of additional parameters,
proved using similar techniques. We shall write $S_r$ to denote
the set
\[
S_r=\Hu\cap\{(x,y)\in\R^{n+1}:x^2+y^2=r^2\},\quad r>0.
\]

\begin{lemma}\label{lem:growthestimate}
Let $\alpha>-1$. For each $k\in\N$, define $L_{\alpha,k}:\Hu\to\R$ by
\[
L_{\alpha,k}(x,y)=\frac{y^{\alpha+1}}{(x^2+y^2)^{(\alpha+n+1+k)/2}}.
\]
$($Thus, modulo a scaling factor we have $\K_{\alpha}=L_{\alpha,0}$.$)$
If $\mu\le \alpha+n+1$ and $f\in L^1(w^{-\mu })$ then
\[
\int L_{\alpha,k}(x-\eta,y)\lvert f(\eta)\rvert d\eta
\le C_{\mu}I(r)r^{\mu-n-k}\sec^{n+k}\vartheta,\quad (x,y)\in S_r,
\]
where $I(r)\to0$ as $r\to\infty$, and $\vartheta\in[0,\pi/2)$ is the angle defined by
$y=r\cos\vartheta$ and $\lvert x\rvert=r\sin\vartheta$ for $(x,y)\in S_r$.
\end{lemma}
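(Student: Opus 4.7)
The plan is to adapt Siegel and Talvila's approach, proceeding by partitioning the integration variable according to its magnitude relative to $r$. Two geometric observations drive the argument. Since $|x|\le r$ on $S_r$, one has $(x-\eta)^2+y^2=r^2+\eta^2-2x\cdot\eta\ge(r-|\eta|)^2$, and trivially $(x-\eta)^2+y^2\ge y^2$. Splitting the exponent $(\alpha+n+1+k)/2=(\alpha+1)/2+(n+k)/2$ and applying each lower bound on a separate factor will then give pointwise control of $L_{\alpha,k}(x-\eta,y)$.

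I would use the decomposition $\R^n=A_1\cup A_2\cup A_3$ with $A_1=\{|\eta|\le r/2\}$, $A_2=\{r/2<|\eta|\le 2r\}$, and $A_3=\{|\eta|>2r\}$. On $A_1\cup A_3$ the inequality $|r-|\eta||\ge r/2$ yields $L_{\alpha,k}(x-\eta,y)\le Cr^{-n-k}$; on $A_3$ the additional observation $|\eta-x|\ge|\eta|/2$ sharpens this to $L_{\alpha,k}(x-\eta,y)\le C|\eta|^{-n-k}$; on $A_2$ only $L_{\alpha,k}(x-\eta,y)\le y^{-n-k}$ is available. Writing $|f(\eta)|=w^\mu(\eta)\cdot|f(\eta)|w^{-\mu}(\eta)$ and estimating $w^\mu(\eta)\le Cr^\mu$ on $A_2$ brings the $A_2$ part to the form $Cr^{\mu-n-k}\sec^{n+k}\vartheta\cdot I_{\mathrm{tail}}(r/2)$, where $I_{\mathrm{tail}}(\rho)=\int_{|\eta|>\rho}|f|w^{-\mu}d\eta\to 0$ as $\rho\to\infty$ by dominated convergence. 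For the $A_3$ part a finer pointwise bound is needed, obtained by taking the geometric mean of the two lower bounds on the denominator with a weight parameter $a=1-\mu/(\alpha+n+1+k)$; the hypothesis $\mu\le\alpha+n+1$ is exactly what keeps $a$ in $[0,1]$, and one arrives at a contribution bounded by $Cr^{\mu-n-k}\sec^{n+k}\vartheta\cdot I_{\mathrm{tail}}(2r)$.

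The main obstacle is the $A_1$ part, since the direct estimate delivers only the non-vanishing quantity $Cr^{\mu-n-k}\|f\|_{L^1(w^{-\mu})}$. To coax out an $I(r)\to 0$ factor I would subdivide $A_1$ at an intermediate radius $R(r)$ chosen so that $R(r)\to\infty$ while $R(r)/r\to 0$ as $r\to\infty$ (any power $R(r)=r^\beta$ with $\beta\in(0,1)$ works): the inner piece $\{|\eta|\le R(r)\}$ is then controlled by $C(R(r)/r)^{\mu}\|f\|_{L^1(w^{-\mu})}\cdot r^{\mu-n-k}\sec^{n+k}\vartheta$, which vanishes as $r\to\infty$, while the annular piece $\{R(r)<|\eta|\le r/2\}$ is bounded by $CI_{\mathrm{tail}}(R(r))\cdot r^{\mu-n-k}\sec^{n+k}\vartheta$. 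Setting $I(r)$ equal to the maximum of the vanishing quantities $(R(r)/r)^{\mu}\|f\|_{L^1(w^{-\mu})}$, $I_{\mathrm{tail}}(R(r))$, $I_{\mathrm{tail}}(r/2)$, and $I_{\mathrm{tail}}(2r)$ then yields the claim.
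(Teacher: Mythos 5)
Your argument is correct (with the same implicit standing assumption $\mu>0$ that the paper's own proof also needs for $I(r)\to 0$), but it is organized quite differently from the proof in the paper. The paper avoids any partition of the $\eta$-domain: starting from $\lvert x-\eta\rvert^2+y^2=(\lvert\eta\rvert-r)^2+2\lvert\eta\rvert r-2\langle x,\eta\rangle$ and the two-sided bound $\cos^2\vartheta/(1+\sin\vartheta)\le 1-\sin\vartheta\cos\omega\le 2$, it derives the single uniform lower bound $\lvert x-\eta\rvert^2+y^2\ge \frac{\cos^2\vartheta}{1+\sin\vartheta}\cdot\frac{(\lvert\eta\rvert+r)^2}{2}$, valid for every $\eta\in\R^n$; splitting the exponent as $\mu/2+(\alpha+n+1+k-\mu)/2$ and taking the supremum over $\eta$ in the second factor (which equals $y^{\mu-n-k}$, using $\mu\le\alpha+n+1$) then yields the estimate in one stroke, with the explicit vanishing factor $I(r)=\int\lvert f(\eta)\rvert(\eta^2+r^2)^{-\mu/2}d\eta$ handled by dominated convergence. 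Your route instead splits $\R^n$ into the regions $\lvert\eta\rvert\le r/2$, $r/2<\lvert\eta\rvert\le 2r$, $\lvert\eta\rvert>2r$, uses a weighted geometric mean of the two elementary lower bounds $(x-\eta)^2+y^2\ge(r-\lvert\eta\rvert)^2$ and $\ge y^2$ on each piece, and needs the extra device of an intermediate radius $R(r)$ with $R(r)\to\infty$, $R(r)/r\to0$ to extract a vanishing factor from the core region; all of these steps check out, and the hypothesis $\mu\le\alpha+n+1$ enters in exactly the right place (keeping your interpolation weight in $[0,1]$, just as it keeps the paper's supremum finite). What the paper's approach buys is brevity and a single clean formula for $I(r)$; what yours buys is a more transparent, region-by-region picture of where each power of $r$ and $\sec\vartheta$ comes from, at the cost of a longer case analysis and a slightly more ad hoc definition of $I(r)$ as a maximum of several tail quantities.
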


\begin{proof}
Introduce the angle $\omega$ defined by $\langle x,\eta\rangle=\lvert x\rvert\lvert\eta\rvert\cos\omega$
for $x$ and $\eta$ in $\R^n$. By the definition of the angle $\vartheta$
we then have $\langle x,\eta\rangle=\lvert\eta\rvert r\sin\vartheta\cos\omega$, which gives
\begin{align*}
\lvert x-\eta\rvert^2+y^2&=(\lvert\eta\rvert-r)^2+2\lvert\eta\rvert r-2\langle x,\eta\rangle\\
&=(1-\sin\vartheta\cos\omega)\bigg(\frac{(\lvert\eta\rvert-r)^2}{1-\sin\vartheta\cos\omega}+2\lvert\eta\rvert r\bigg).
\end{align*}
It is straightforward to check that
\begin{equation}\label{lowerupperbound}
\frac{\cos^2\vartheta}{1+\sin\vartheta}\le 1-\sin\vartheta\cos\omega\le2,
\end{equation}
and using these bounds we obtain the estimate
\[
\lvert x-\eta\rvert^2+y^2\ge\frac{\cos^2\vartheta}{1+\sin\vartheta}
\bigg(\frac{(\lvert\eta\rvert-r)^2}{2}+2\lvert\eta\rvert r\bigg)
=\frac{\cos^2\vartheta}{1+\sin\vartheta}\cdot\frac{(\lvert\eta\rvert +r)^2}{2}.
\]
Hence, for $\mu\ge0$ we have
\begin{equation}\label{smartbound}
(\lvert x-\eta\rvert^2+y^2)^{-\mu/2}\le\frac{(1+\sin\vartheta)^{\mu/2}}{\cos^{\mu}\vartheta}
\cdot\frac{2^{\mu/2}}{(\lvert\eta\rvert+r)^{\mu}}
\le\frac{2^{\mu}}{\cos^{\mu}\vartheta}(\eta^2+r^2)^{-\mu/2},
\end{equation}
where we in the last inequality also use the fact that $(a^2+b^2)^{\frac{1}{2}}\le a+b$ when $a$ and $b$ are positive real numbers.
Now write
\begin{equation}
\int L_{\alpha,k}(x-\eta,y)\lvert f(\eta)\rvert d\eta=
\int \frac{\lvert f(\eta)\rvert}{(\lvert x-\eta\rvert^2+y^2)^{\mu/2}}
\cdot\frac{y^{\alpha+1}d\eta}{(|x-\eta|^2+y^2)
^{(\alpha+n+1+k-\mu)/2}}
\end{equation}
and note that
\begin{equation}\label{eq:supinsteadofLq}
y^{\alpha+1}\sup_{\eta}{(\lvert x-\eta\rvert^2+y^2)^{-(\alpha+n+1+k-\mu)/2}}
=y^{\mu-n-k}
\end{equation}
for $k\in\N$ when $\mu\le\alpha+n+1$.
Hence, by virtue of \eqref{smartbound} we find  that
\begin{equation}\label{usingholder}
\int L_{\alpha,k}(x-\eta,y)\lvert f(\eta)\rvert d\eta
\le 2^\mu I(r) y^{\mu-n-k}\sec^{\mu}{\vartheta},
\end{equation}
where
\begin{equation}\label{Ioneofr}
I(r)=\int\frac{\lvert f(\eta)\rvert}{(\eta^2+r^2)^{\mu/2}}d\eta\to0\quad
\text{as }r\to\infty
\end{equation}
by the dominated convergence theorem since $I(r)\le \|f\|_{L^1(w^{-\mu })}$ when $r=(x^2+y^2)^{\frac{1}{2}}\ge 1$.
Recalling that for $(x,y)\in S_r$ we have $y=r\cos\vartheta$ we obtain
\[
\int L_{\alpha,k}(x-\eta,y)\lvert f(\eta)\rvert d\eta\le
2^\mu I(r) r^{\mu-n-k}\sec^{n+k}\vartheta,
\]
which yields the result.
\end{proof}

Before using Lemma \ref{lem:growthestimate} to obtain
growth estimates of the 
Poisson integral $\K_\alpha[f]$
for general $f\in\wa\De'_{L^1}$, we mention that
an application of the lemma with $k=0$ and $\mu=\alpha+n+1$
shows that
if $f\in L^1(\wa^{-1})$ then
\begin{equation}\label{eq:orderrelation}
\sup_{S_r}{\lvert \K_{\alpha,y}\ast f(x)\cos^n\vartheta\rvert}=o(r^{\alpha+1})
\quad\text{as }r\to\infty.
\end{equation}
When $\alpha=0$ we recover the corresponding result of Siegel and Talvila~\cite[Corollary 2.1]{SiegelTalvila}
for the usual Poisson integral of elements in $L^1(w^{-(n+1)})$. 
In analogy, the order relation \eqref{eq:orderrelation} is sharp in the sense that the exponents cannot in general be decreased.
To prove this, the arguments used for $\alpha=0$ by
Siegel and Talvila~\cite[p.~576]{SiegelTalvila} 
are adapted to handle the full parameter range $\alpha>-1$.

Let $\hat e_1$ be the unit vector along the $x_1$ axis.
Let $f_k$, $a_k$ and $\varrho_k$ be positive
real numbers such that $\varrho_k<1$, $a_k\to\infty$ as $k\to\infty$
and the balls $B_{\varrho_k}(a_k\hat e_1)\subset\R^n$ with center at $a_k\hat e_1$
and radius $\varrho_k$ are disjoint. Define a continuous function
$f$ vanishing outside these balls by
\[
f(x)=
\begin{cases}
f_k(1-\frac{1}{\varrho_k}\lvert x-a_k\hat e_1\rvert), & x\in B_{\varrho_k}(a_k\hat e_1),\\
0 & \text{otherwise}.
\end{cases}
\]
It is straightforward to check that $f\in L^1(\wa^{-1})$ if and only if
\begin{equation}\label{eq:seriesconverges}
\sum_k f_k\frac{\varrho_k^n}{a_k^{\alpha+n+1}}<\infty,
\end{equation}
and that if $u(x,y)=\K_{\alpha}[f](x,y)$, then $u$ can be written as
a superposition
\[
u(x,y)=\sum_k f_k\int_{B_1(0)}(1-\lvert \eta\rvert) \K_{\alpha,y}\bigg(\frac{x-a_k\hat e_1}{\varrho_k}-\eta,
\frac{y}{\varrho_k}\bigg) d\eta
\]
of translates of the function $\tilde u(x,y)=\K_{\alpha,y}\ast\max{(0,1-\lvert{\hdot}\rvert)}(x)$.
Here $B_1(0)$ is the unit ball in $\R^n$. Note that $D_\alpha \tilde u=0$ in $\Hu$
and $\tilde u(x,0)=\max{(0,1-\lvert x\rvert)}$. Since $\tilde u\ge0$ we thus have
\begin{equation}\label{eq:lowerboundforeachk}
u(x,y)\ge f_k\tilde u\bigg(\frac{x-a_k\hat e_1}{\varrho_k},
\frac{y}{\varrho_k}\bigg),\quad k\ge1.
\end{equation}
We now claim that if $\beta+\gamma<\alpha+n+1$, or $\beta+\gamma=\alpha+n+1$
but $\gamma<n$, then $r^{-\beta}u(x,y)\cos^\gamma\vartheta$ does not tend
to zero along the sequence $(x^{(k)},y^{(k)})=(a_k\hat e_1,\varrho_k)$
for appropriate choices of the numbers $f_k$, $a_k$ and $\varrho_k$.
Indeed, if $\beta+\gamma<\alpha+n+1$, set $a_k=e^k$, $f_k=e^{k(\alpha+n+1)}$
and $\varrho_k=k^{-2}$. Then the series \eqref{eq:seriesconverges}
is easily seen to be convergent, while
\[
\frac{(y^{(k)})^\gamma u(x^{(k)},y^{(k)})}{((x^{(k)})^2+(y^{(k)})^2)^{(\beta+\gamma)/2}}
\ge\frac{\varrho_k^\gamma f_k\tilde u(0,1)}{(a_k^2+\varrho_k^2)^{(\beta+\gamma)/2}}
=\frac{k^{-2\gamma}e^{k(\alpha+n+1)}\tilde u(0,1)}{(e^{2k}+k^{-4})^{(\beta+\gamma)/2}}
\]
does not tend to zero as $k\to\infty$ since $\beta+\gamma<\alpha+n+1$ and
\[
\tilde u(0,1)=\frac{\Gamma((\alpha+n+1)/2)}{\Gamma((\alpha+1)/2)\pi^{n/2}}
\int_{B_1(0)}\frac{(1-\lvert\eta\rvert)}{(1+\eta^2)}d\eta\ne 0.
\]
If instead $\beta+\gamma=\alpha+n+1$,
but $\gamma<n$, let $\ve=n-\gamma>0$ and set
$a_k=e^k$, $f_k=e^{k(\alpha+n+1)}k^{\gamma(1+\ve)/\ve}$
and $\varrho_k=k^{-(1+\ve)/\ve}$. Then the left-hand side of
\eqref{eq:seriesconverges} is equal to $\sum_k k^{-1-\ve}<\infty$.
However, $\varrho_k^\gamma f_k=e^{k(\alpha+n+1)}$, so
\[
\frac{(y^{(k)})^\gamma u(x^{(k)},y^{(k)})}{((x^{(k)})^2+(y^{(k)})^2)^{(\beta+\gamma)/2}}
\ge\frac{e^{k(\alpha+n+1)}\tilde u(0,1)}{(e^{2k}+k^{-2(1+\ve)/\ve})^{(\alpha+n+1)/2}}
\]
does not tend to zero as $k\to\infty$. Thus,
the order relation \eqref{eq:orderrelation} is sharp.

\begin{thm}\label{thm:growthestimate}
Let $\alpha>-1$. Let $f\in \wa\De_{L^1}'$. Then there exists a nonnegative integer $m$ depending only on
the distribution $f$ such that
the Poisson integral $\K_\alpha[f]$
satisfies
\begin{equation}\label{eq:asymptoticgrowth}
\sup_{S_r}{\lvert \K_{\alpha,y}\ast f(x)\cos^{n+m}\vartheta\rvert}=o(r^{\alpha+1})\quad\text{as }r\to\infty,
\end{equation}
where $\vartheta\in[0,\pi/2)$ is the angle defined by
$y=r\cos\vartheta$ and $\lvert x\rvert=r\sin\vartheta$ for $(x,y)\in S_r$.
\end{thm}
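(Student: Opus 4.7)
The plan is to reduce from a general distribution $f \in \wa\De_{L^1}'$ to a finite sum of derivatives of functions in $L^1(\wa^{-1})$, and then apply Lemma \ref{lem:growthestimate} term-by-term after moving the derivatives onto the kernel.

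First, I would invoke the representation formula \eqref{representationformula} to write
\[
f = \sum_{\beta} \partial^\beta f_\beta, \qquad f_\beta \in L^1(w^{-(\alpha+n+1)}) = L^1(\wa^{-1}),
\]
where the sum is over a finite set of multi-indices. Let $m$ be the maximum of $|\beta|$ over this set; this is the $m$ claimed in the statement. Since each $\partial^\beta f_\beta \in \wa\De_{L^1}'$, Proposition \ref{prop:diffofconvolution} together with the fact that $\partial^\beta$ commutes with $\K_{\alpha,y}\ast$ in the $\Es'$-convolution sense yields
\[
\K_{\alpha,y}\ast f(x) = \sum_\beta \partial^\beta (\K_{\alpha,y} \ast f_\beta)(x) = \sum_\beta \bigl( \partial_x^\beta \K_{\alpha,y}\bigr)\ast f_\beta(x),
\]
the second equality being justified since $f_\beta \in L^1$ makes the convolutions honest.

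Next I would establish the pointwise derivative estimate
\[
\bigl|\partial_x^\beta \K_{\alpha,y}(x)\bigr| \le C_{\alpha, n, \beta}\, L_{\alpha,|\beta|}(x,y),
\]
where $L_{\alpha,k}$ is the function from Lemma \ref{lem:growthestimate}. This is a routine induction on $|\beta|$: each $\partial_{x_i}$ applied to $(x^2+y^2)^{-s}$ produces a factor $-2s\, x_i (x^2+y^2)^{-s-1}$, and since $|x_i| \le (x^2+y^2)^{1/2}$, one derivative turns an exponent $-s$ into an estimate by $(x^2+y^2)^{-s-1/2}$; after $|\beta|$ derivatives the exponent in the denominator is raised by $|\beta|/2$, which is precisely the increment from $\K_\alpha = L_{\alpha,0}$ to $L_{\alpha,|\beta|}$.

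Combining these two observations with Lemma \ref{lem:growthestimate} applied to each $f_\beta \in L^1(\wa^{-1})$ with parameters $k = |\beta|$ and $\mu = \alpha+n+1$, I get for $(x,y) \in S_r$
\[
\bigl|\bigl(\partial_x^\beta \K_{\alpha,y}\bigr)\ast f_\beta(x)\bigr| \le C_\beta\, I_\beta(r)\, r^{\alpha+1-|\beta|}\sec^{n+|\beta|}\vartheta,
\]
with $I_\beta(r) \to 0$ as $r\to\infty$. Multiplying by $\cos^{n+m}\vartheta$ converts each $\sec^{n+|\beta|}\vartheta$ factor into $\cos^{m-|\beta|}\vartheta \le 1$, so
\[
\bigl|\K_{\alpha,y}\ast f(x)\bigr| \cos^{n+m}\vartheta \le \sum_\beta C_\beta\, I_\beta(r)\, r^{\alpha+1-|\beta|} \le \Bigl(\sum_\beta C_\beta I_\beta(r)\Bigr) r^{\alpha+1}
\]
for $r \ge 1$. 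Taking the supremum over $S_r$ and noting that each $I_\beta(r)\to 0$ as $r\to\infty$ gives the claimed $o(r^{\alpha+1})$ bound.

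The only slightly delicate point is the derivative estimate for $\K_{\alpha,y}$ and the bookkeeping that absorbs all the $\sec^{n+|\beta|}\vartheta$ factors with a single power $\cos^{n+m}\vartheta$; once $m$ is taken to be the largest order of differentiation appearing in the representation of $f$, everything fits together cleanly.
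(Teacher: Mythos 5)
Your proof is correct and follows essentially the same route as the paper: decompose $f=\sum_{|\beta|\le m}\partial^\beta f_\beta$ via \eqref{representationformula}, transfer the derivatives onto the kernel, bound $|\partial_x^\beta\K_{\alpha,y}|$ by $C\,L_{\alpha,|\beta|}$, and apply Lemma \ref{lem:growthestimate} with $\mu=\alpha+n+1$ before absorbing all the $\sec^{n+|\beta|}\vartheta$ and $r^{-|\beta|}$ factors using $r\ge1$ and $|\beta|\le m$. The only cosmetic slip is writing $f_\beta\in L^1$ rather than $f_\beta\in L^1(\wa^{-1})$ when justifying that the convolutions are honest integrals; the absolute convergence is in fact supplied by the lemma itself, exactly as in the paper.
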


\begin{proof}
By the representation formula \eqref{representationformula} we have $f=\sum_{\lvert\beta\rvert\le m} \partial^\beta f_\beta$
where $f_\beta\in L^1(\wa^{-1})$ and $m\in\N$. Note that
\[
\lvert\partial_\eta^\beta \K_{\alpha,y}(x-\eta)\rvert\le C_{\beta,\alpha,n}
\frac{y^{\alpha+1}}{(\lvert x-\eta\rvert^2+y^2)^{(\alpha+n+1+\lvert\beta\rvert)/2}}
=C_{\beta,\alpha,n} L_{\alpha,\lvert\beta\rvert}(x-\eta,y),
\]
where $L_{\alpha,k}$ is the function defined in Lemma \ref{lem:growthestimate} for $k\in\N$.
Thus
\begin{align*}
\K_{\alpha,y}\ast f(x)&
=\sum_{\lvert\beta\rvert\le m}\langle \wa^{-1}\partial^\beta f_\beta,\wa\tau_x \K_{\alpha,y}\rangle_{\De_{L^1}',\Boc}\\
&=\sum_{\lvert\beta\rvert\le m}(-1)^{\lvert\beta\rvert}\langle f_\beta,\partial^\beta\tau_x \K_{\alpha,y}\rangle_{\De_{L^1}',\Boc}
\end{align*}
where each term $\langle f_\beta,\partial^\beta\tau_x \K_{\alpha,y}\rangle_{\De_{L^1}',\Boc}$
in the right-hand side can be identified with the corresponding integral $\int f_\beta(\eta)\partial_\eta^\beta \K_{\alpha,y}(x-\eta)d\eta$
in view of Lemma \ref{lem:growthestimate}. Applying the lemma with $\mu=\alpha+n+1$ gives the estimate
\begin{align*}
\lvert \K_{\alpha,y}\ast f(x)\rvert &
\le \sum_{\lvert\beta\rvert\le m}C_\beta
r^{\alpha+1-\lvert\beta\rvert}\sec^{n+\lvert\beta\rvert}\vartheta I_\beta(r)\\
&\le C_m r^{\alpha+1}\sec^{n+m}\vartheta\sum_{k=0}^m r^{-k}\sec^{k-m}\vartheta R_k(r),
\end{align*}
where $R_k(r)=\sum_{\lvert\beta\rvert=k}I_\beta(r)\to0$ as $r\to\infty$. Since $\lvert r^{-k}\sec^{k-m}\vartheta\rvert\le 1$
for $0\le k\le m$ and $r\ge1$, this completes the proof.
\end{proof}

As a final note, we shall briefly discuss the question of uniqueness of solutions to \eqref{Dirichletproblem}.
The result by Huber~\cite{HuberUnique} included above as Theorem \ref{thm:Huberuniqueness} makes evident
that a growth condition at infinity
is needed in order to have uniqueness of solutions for the Dirichlet problem \eqref{Dirichletproblem}.
In fact, the function
\begin{equation}\label{exfcn}
u(x,y)=y^{\alpha+1},\quad (x,y)\in\Hu,
\end{equation}
satisfies $D_\alpha u=0$ in $\Hu$ and vanishes on the boundary.
In analogy with the unweighted case, solutions to \eqref{Dirichletproblem}
satisfy the following principle of Phragm{\'e}n-Lindel{\"o}f type, also due to Huber~\cite{HuberPL}.
The result is stated verbatim but using our choice of notation.

\begin{thm}[A. Huber]\label{thm:HuberPL}
Let $u$ be a solution of $D_\alpha u=0$ $(\alpha>-1)$, defined in $\Hu$ and satisfying at
the boundary
\begin{flalign*}
 && \limsup_{(x,y)\to (x_0,0)} u(x,y) \le 0  && \llap{$((x,y)\in\Hu;\  x_0\in\R^n)$.}
\end{flalign*}
If follows that
\begin{enumerate}
\item[$(\mathrm{a})$] the limit $\varrho=\lim_{r\to\infty}m(r)/r^{\alpha+1}$, where $m(r)=\sup_{(x,y)\in S_r}u(x,y)$,
always exists $($finite or infinite$)$,
\item[$(\mathrm{b})$] $\varrho\ge 0$,
\item[$(\mathrm{c})$] $u\le \varrho y^{\alpha+1}$ holds throughout $\Hu$,
\item[$(\mathrm{d})$] if in $\mathrm{(c)}$ the equality is attained in at least one point of
$\Hu$, then $u\equiv \varrho y^{\alpha+1}$.
\end{enumerate}
\end{thm}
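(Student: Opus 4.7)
The plan is to prove Huber's Phragmén--Lindelöf principle by comparison with the explicit barrier $y^{\alpha+1}$, which is itself a solution of $D_\alpha u=0$ vanishing on $\partial\Hu$. Consequently, for every $c\in\R$ the shifted function $u_c:=u-c\,y^{\alpha+1}$ also satisfies $D_\alpha u_c=0$ in $\Hu$ with the same boundary behavior $\limsup u_c\le 0$. Since $D_\alpha$ is uniformly elliptic with smooth coefficients on any compact subset of $\Hu$, the weak and strong maximum principles apply on half-balls $B_R^+:=B_R\cap\Hu$, modulo a standard $\varepsilon$-exhaustion near $\partial\Hu$ to convert the one-sided limsup condition into an effective upper bound on the equatorial boundary.

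I would first establish (a) and (c) together. The maximum principle applied to $u_c$ on $B_R^+$ yields, after handling the boundary values,
\[
\sup_{B_R^+} u_c\ \le\ \max\bigl(0,\ \sup_{S_R} u_c\bigr).
\]
Choosing $c$ just above $m(R)/R^{\alpha+1}$ and using $y\le R$ on $S_R$ gives $u\le c\,y^{\alpha+1}$ on $B_R^+$ whenever $c\ge 0$; sending $R\to\infty$ pins $u$ pointwise below $(\limsup_{r\to\infty} m(r)/r^{\alpha+1})\,y^{\alpha+1}$. A direct analysis on $S_r$ using the parametrization $y=r\cos\vartheta$, $|x|=r\sin\vartheta$ then shows that $m(r)/r^{\alpha+1}$ has an actual limit $\varrho$ coinciding with this limsup, proving (a), and hence $u\le\varrho\,y^{\alpha+1}$ globally, proving (c). The case $\varrho=+\infty$ is handled trivially at both steps.

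The main obstacle I anticipate is part (b), the non-negativity $\varrho\ge 0$. The upper bound (c) alone does not yield this, as $u=-y^{\alpha+1}$ satisfies the hypotheses with $\varrho=0$ while being strictly negative throughout $\Hu$. The argument must proceed by contradiction: if $\varrho<0$, then (c) forces $u<0$ in $\Hu$, and one needs to show that the sup $m(r)$ on the hemisphere $S_r$ is inevitably approached near the equator $\vartheta\to\pi/2$, where the boundary limsup $\limsup u\le 0$ drags the values of $u$ upward, yielding $\liminf_{r\to\infty} m(r)/r^{\alpha+1}\ge 0$, contradicting $\varrho<0$. Making this precise is delicate: it requires combining the continuity of $u$ inside $\Hu$ with the one-sided boundary condition, and probably an auxiliary local comparison at the equatorial direction using the barrier $\varepsilon\,y^{\alpha+1}$ for small $\varepsilon>0$.

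For (d), assume $u(x_0,y_0)=\varrho\,y_0^{\alpha+1}$ at some interior point $(x_0,y_0)\in\Hu$. Then $v:=\varrho\,y^{\alpha+1}-u$ is a non-negative solution of $D_\alpha v=0$ vanishing at $(x_0,y_0)$. Since $D_\alpha v=0$ on $\Hu$ is equivalent, away from $y=0$, to the uniformly elliptic equation $\Delta v-(\alpha/y)\partial_y v=0$, the strong maximum principle forces $v\equiv 0$ on the connected open set $\Hu$, giving $u\equiv\varrho\,y^{\alpha+1}$.
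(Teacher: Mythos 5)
First, a point of reference: the paper does not prove Theorem \ref{thm:HuberPL} at all --- it is quoted verbatim from Huber~\cite{HuberPL} --- so there is no in-paper argument to compare against, and your proposal has to stand on its own. It does not: the central step fails. The claim that choosing $c$ just above $m(R)/R^{\alpha+1}$ ``gives $u\le c\,y^{\alpha+1}$ on $B_R\cap\Hu$'' is not a consequence of the maximum principle, because $u_c=u-c\,y^{\alpha+1}$ is not $\le 0$ on the spherical part $S_R$ of the boundary of the half-ball: there one only knows $u\le m(R)\approx c\,R^{\alpha+1}$, while the barrier $c\,y^{\alpha+1}$ is far smaller than $c\,R^{\alpha+1}$ except near the pole $y=R$ and tends to $0$ at the equator. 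Thus $\sup_{S_R}u_c$ can be of size $m(R)$, and the maximum principle yields nothing beyond the trivial bound $u\le\max(0,m(R))$ on the half-ball. This is exactly the difficulty that makes Phragm\'en--Lindel\"of theorems nontrivial; Huber overcomes it with the explicit generalized Poisson kernel for the hemisphere (cf.\ the reference to \cite{HuberUnique} in the introduction), which shows that the contribution of $S_R$ to $u(x,y)$ at a fixed interior point is $O\bigl(m(R)(y/R)^{\alpha+1}\bigr)$ with the sharp constant as $R\to\infty$. The existence of the limit in (a) is likewise not obtained by ``a direct analysis on $S_r$'': from the pointwise bound $u\le\varrho^{*}y^{\alpha+1}$ with $\varrho^{*}=\limsup m(r)/r^{\alpha+1}$ one only recovers $\limsup m(r)/r^{\alpha+1}\le\varrho^{*}$, a tautology, and no lower bound on $m(r)$; a genuine monotonicity or convexity argument (with respect to a two-parameter comparison family such as $a\,y^{\alpha+1}+b\,\K_\alpha(x,y)$, both solutions vanishing on the boundary away from the origin) is required.

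For (b) you rightly flag the obstacle, but the heuristic is backwards: $\limsup_{(x,y)\to(x_0,0)}u\le 0$ is an \emph{upper} bound and cannot ``drag the values of $u$ upward''; nothing in the hypotheses prevents $u$ from being very negative near the equator, so no contradiction with $\varrho<0$ can be extracted that way. The actual content of (b) is a statement about positive solutions: if $\varrho<0$, then $-u$ is a positive solution of $D_\alpha v=0$ satisfying $-u\ge\delta\,(x^2+y^2)^{(\alpha+1)/2}$ outside a compact set, and one rules this out by a Herglotz/Riesz-type representation (or an integrated harmonic-measure estimate), showing that the boundary measure of such a solution would violate the integrability condition $\int\wa^{-1}\,d\mu<\infty$ that every positive solution must satisfy. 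This is a separate theorem, not a corollary of (c). Only part (d) of your sketch is sound: the strong maximum principle applied to $v=\varrho\,y^{\alpha+1}-u\ge 0$ (for finite $\varrho$) does give $v\equiv 0$, since away from $y=0$ the equation is locally uniformly elliptic with no zeroth-order term.
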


By regularization we immediately obtain the following analog
for boundary values interpreted in the sense of \eqref{Dirichletproblem}.

\begin{cor}\label{Dalfauniqueness.distribution}
Let $\alpha>-1$. Let $u$ be a solution to the equation $D_\alpha u=0$ in $\Hu$ such that 
$\sup_{S_r}\!u(x,y)=o(r^{\alpha+1})$ as $r\to\infty$  and $\lim_{y\to 0}u_y=0$ in $\Es'$. 
Then $u=0$ in $\Hu$.
\end{cor}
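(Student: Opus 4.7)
The plan is to reduce to Huber's Phragm{\'e}n-Lindel{\"o}f principle (Theorem \ref{thm:HuberPL}) by upgrading the distributional boundary condition to the pointwise one required there, via a regularization in the $x$-variable. Since $D_\alpha$ is uniformly elliptic on compact subsets of $\Hu$, interior regularity yields $u\in\Ci^\infty(\Hu)$. Fix a nonnegative mollifier $\varphi\in\Ci_0^\infty(\R^n)$ with $\int\varphi\,dx=1$ and set $\varphi_\ve(x)=\ve^{-n}\varphi(x/\ve)$. Define the regularization
\[
U_\ve(x,y) = (u_y\ast\varphi_\ve)(x) = \langle u_y,\tau_x\check\varphi_\ve\rangle_{\Es',\Es},\quad (x,y)\in\Hu.
\]
Because the coefficients of $D_\alpha$ are independent of $x$, $D_\alpha U_\ve=0$ in $\Hu$.

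Next I would verify that $U_\ve$ satisfies the hypotheses of Theorem \ref{thm:HuberPL}. For the pointwise boundary behavior, the claim is that $U_\ve(x,y)\to 0$ as $(x,y)\to(x_0,0)$ in $\Hu$, for each $x_0\in\R^n$. Splitting
\[
U_\ve(x,y) = \langle u_y,\tau_{x_0}\check\varphi_\ve\rangle + \langle u_y,\tau_x\check\varphi_\ve - \tau_{x_0}\check\varphi_\ve\rangle,
\]
the first term tends to $0$ as $y\to 0$ by the hypothesis $u_y\to 0$ in $\Es'$. For the second, the convergent family $\{u_y\}_{0<y\le y_0}$ is bounded in $\Es'$, hence by Banach-Steinhaus equicontinuous on the Fr{\'e}chet space $\Es$, and the continuity of translation $x\mapsto\tau_x\check\varphi_\ve$ in $\Es$ makes it vanish uniformly in small $y$ as $x\to x_0$. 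For the growth hypothesis, the pointwise bound $|U_\ve(x,y)|\le\sup_{|t|\le\ve}|u(x-t,y)|$ together with the assumption (read symmetrically as $\sup_{S_r}|u|=o(r^{\alpha+1})$, so that it applies to $\pm u$) yields $\sup_{S_r}|U_\ve|=o(r^{\alpha+1})$.

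With these two properties, Theorem \ref{thm:HuberPL} applied to $U_\ve$ forces $\varrho=0$ and hence $U_\ve\le 0$ in $\Hu$; applied to $-U_\ve$, it gives $U_\ve\ge 0$. Hence $U_\ve\equiv 0$ for every $\ve>0$. Finally, for each fixed $(x,y)\in\Hu$, $u_y\in\Ci^\infty(\R^n)$, so the standard mollification property gives $U_\ve(x,y)\to u(x,y)$ as $\ve\to 0$, and therefore $u\equiv 0$. I expect the main obstacle to be the verification of the continuous boundary trace of $U_\ve$: converting the weak convergence $u_y\to 0$ in $\Es'$ into a genuine joint limit at the boundary relies essentially on the equicontinuity of bounded subsets of $\Es'$ (via Banach-Steinhaus on the Fr{\'e}chet space $\Es$) together with the continuity of translation in $\Es$.
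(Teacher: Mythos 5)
Your proposal is correct and follows essentially the same route as the paper: regularize in the $x$-variable by a compactly supported test function, check that the regularization inherits the equation, the vanishing boundary limit (via joint continuity of the distributional pairing, i.e.\ the Banach--Steinhaus/equicontinuity fact the paper cites from H\"ormander), and the $o(r^{\alpha+1})$ growth, then apply Huber's Phragm\'en--Lindel\"of theorem to $\pm$ the regularization. The only cosmetic difference is that you use a mollifier family and let $\ve\to0$ at the end, whereas the paper fixes an arbitrary $\psi\in\Ci_0^\infty$ and concludes by varying $\psi$; both likewise read the growth hypothesis two-sidedly so that it applies to $-u$.
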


\begin{proof}
Let $\psi\in \Ci_0^\infty$ be a 
compactly supported test function on $\R^n$ and consider the regularization
$$
u_\psi(x,y)=\int u(x-\eta,y)\psi(\eta)d\eta, \quad y>0,  
$$
of $u$. Thus
$u_\psi(x,y)=u_y\ast\psi(x)$ so $D_\alpha u_\psi=0$ in $\Hu$. If we regard
$u_y$ as a distribution in $\Es'$ arising from the map $\eta\mapsto u_y(\eta)$,
then $u_\psi$ can also be regarded as the function $u_\psi(x,y)=\langle u_y,\psi(x-{\hdot})\rangle$
obtained by letting $u_y$ act on $\eta\mapsto\psi(x-\eta)$. Since translation
is continuous on $\Ci_0^\infty$ it thus follows that
\[
\limsup_{(x,y)\to (x_0,0)}u_\psi(x,y)=\limsup_{(x,y)\to (x_0,0)}\langle u_y,\psi(x-{\hdot})\rangle=0
\]
for all $x_0\in\R^n$ since $u_y\to 0$ in $\Es'$, see H{\"o}rmander~\cite[Theorem 2.1.8]{Hormander1}.
Moreover, 
the growth assumption on $u$ together with the fact that $\psi$ is compactly supported
implies that $u_\psi$ satisfies the same growth condition.
In fact, by assumption we can
for every $\ve>0$ find $r_\ve$ such that
\begin{equation}\label{eq:asymptoticassumption}
(\xi^2+\eta^2)^{-(\alpha+1)/2}u(\xi,\eta)<\frac{\ve}{2^{\alpha+1}\lVert\psi\rVert_{L^1}},
\quad \xi^2+\eta^2\ge r_\ve.
\end{equation}
If $\supp{\psi}\subset(-R,R)$, it follows that for any $r\ge r_\ve+R$ we have
$(x-t)^2+y^2\ge r_\ve^2$ for $(x,y)\in S_r$. By \eqref{eq:asymptoticassumption}
this gives
\begin{equation}
u_\psi(x,y)<\frac{\ve}{2^{\alpha+1}\lVert\psi\rVert_{L^1}}\int_{-R}^R((x-t)^2+y^2)^{(\alpha+1)/2}\lvert\psi(t)\rvert dt
\le\ve r^{\alpha+1}
\end{equation}
for $(x,y)\in S_r$, so $\sup_{S_r}\!u_\psi(x,y)=o(r^{\alpha+1})$ as $r\to\infty$.
Hence, Theorem \ref{thm:HuberPL}
implies that $u_\psi\le0$ in $\Hu$. A repetition of the arguments applied to $-u$
shows that $u_\psi=0$ in $\Hu$. Varying $\psi\in \Ci_0^\infty$ we conclude
that $u(x,y)=0$ for all $(x,y)\in\Hu$.
\end{proof}

Unfortunately, the growth conditions imposed in 
Corollary \ref{Dalfauniqueness.distribution} are not compatible with those satisfied
by the Poisson integral $\K_{\alpha}[f]$ of elements $f\in\wa\De_{L^1}'$; in fact
$L^1(\wa^{-1})\subset\wa\De'_{L^1}$ and the order relation \eqref{eq:orderrelation} is sharp
for $f\in L^1(\wa^{-1})$. Hence, stronger results are needed if we are to conclude
that $u=\K_{\alpha}[f]$ is the unique solution to the Dirichlet problem \eqref{Dirichletproblem}
under appropriate growth constraints.
In the harmonic case $\alpha=0$, a uniqueness result
for the classical Dirichlet problem with continuous boundary data
has been obtained by Siegel and Talvila~\cite[Theorem 3.1]{SiegelTalvila}
assuming a growth condition of type \eqref{eq:orderrelation}.
For distributional boundary data, Alvarez, Guzm{\'a}n-Partida and P{\'e}rez-Esteva~\cite[Theorem 4.1]{Alvarezetal2}
provide conditions under which functions harmonic in $\Hu$ may be
represented as Poisson integrals of the data, modulo constant multiples of the
nontrivial solution $(x,y)\mapsto y$.

\section*{Acknowledgement}
I am deeply grateful to Anders Olofsson for many helpful discussions on the subject.
I also wish to express my gratitude to Yoshinori Morimoto and Kyoto University for their
hospitality. The research was supported in part
by JSPS Kakenhi Grant No.~24-02782,
Japan Society for the Promotion of Science.


\bibliographystyle{amsplain}
\bibliography{referenser}

\end{document}